\documentclass[11pt]{amsart}

\usepackage{amssymb,amsmath,amsthm,amscd}
\usepackage{latexsym, amsxtra, mathrsfs}
\usepackage[svgnames]{xcolor}
\usepackage{hyperref}
\hypersetup{colorlinks,%
    citecolor=Blue,%
    linkcolor=Purple,%
}

\advance\oddsidemargin by -1cm
\advance\evensidemargin by -1cm 
\textwidth=150mm


\newtheorem{theorem}{Theorem}

\newtheorem{proposition}{Proposition}
\newtheorem{corollary}{Corollary}
\newtheorem{lemma}{Lemma}
\newtheorem{definition}{Definition}

\theoremstyle{definition}
\newtheorem{remark}{Remark}

\newcommand{\bdm}{\begin{displaymath}}
\newcommand{\edm}{\end{displaymath}}
\newcommand{\bq}{\begin{equation}}
\newcommand{\eq}{\end{equation}}
\newcommand{\bqn}{\begin{equation*}}
\newcommand{\eqn}{\end{equation*}}


\newcommand{\eps}{\varepsilon}


\newcommand{\mklm}[1]{\left\{ #1 \right\}}
\newcommand{\eklm}[1]{\left\langle  #1 \right\rangle}

\renewcommand{\d}{\,d}
\newcommand{\N}{{\mathbb N}}

\newcommand{\C}{{\mathbb C}}
\newcommand{\R}{{\mathbb R}}
\newcommand{\A}{{\mathcal A}}

\newcommand{\D}{{\mathcal D}}

\newcommand{\F}{{\mathcal F}}

\renewcommand{\H}{{\mathcal H}}
\newcommand{\Ucal}{{\mathcal U}}

\renewcommand{\epsilon}{\varepsilon}
\renewcommand{\phi}{\varphi}
\renewcommand{\rho}{\varrho}
\newcommand{\1}{{ \bf  1}}

\newcommand{\Cinft}{{\rm C^{\infty}}}
\newcommand{\CT}{{\rm C^{\infty}_c}}

\renewcommand{\L}{{\rm L}}

\newcommand{\Ncal}{{\mathcal N}}

\renewcommand{\S}{{\mathcal S}}

\newcommand{\Sym}{{\rm S}}
\newcommand{\Syms}{{\rm S^{-\infty}}}

\newcommand{\GL}{\mathrm{GL}}

\newcommand{\g}{{\bf \mathfrak g}}

\renewcommand{\a}{{\bf\mathfrak a}}

\newcommand{\n}{{\bf\mathfrak n}}

\newcommand{\U}{{\mathfrak U}}

\newcommand{\ad}{\mathrm{ad}\,}

\renewcommand{\det}{\mathrm{det}\,}

\renewcommand{\Re}{\mathrm{Re}\,}

\newcommand{\Fix}{\mathrm{Fix}}

\DeclareMathOperator{\supp}{supp}

\DeclareMathOperator{\tr}{tr}
\DeclareMathOperator{\gd}{\partial}

\DeclareMathOperator{\Tr}{Tr}

\newcommand{\e}[1]{\,{\mathrm e}^{#1}\,}
\newcommand{\dbar}{{\,\raisebox{-.1ex}{\={}}\!\!\!\!d}}



\begin{document}

\author{St\'ephanie Cupit-Foutou, Aprameyan Parthasarathy, Pablo Ramacher}
\title[Microlocal analysis on Wonderful Varieties]{Microlocal analysis on Wonderful varieties.\\  Regularized traces and global characters }
\subjclass{22E46, 53C35, 32J05, 58J40, 58J37, 58J35, 47A10, 58C30, 20C15.}
\keywords{}
\thanks{This first-named author was  funded  by the SFB/TR 12 of the German Research Foundation (DFG).
The two last authors would like to thank Yuri Tschinkel 
for seminal conversations about the subject of this paper during their stay at the Mathematical Institute of G\"ottingen University.}
\date{November 9, 2015}

\begin{abstract}
Let $\mathbf{G}$ be a connected reductive complex algebraic group with split real form $(G,\sigma)$.
Consider a strict wonderful $\mathbf{G}$-variety $\bf{X}$ equipped with its $\sigma$-equivariant real structure, 
and let $X$ be the corresponding real locus. Further, let $E$ be a real differentiable $G$-vector bundle over $X$. 
In this paper, we introduce a distribution character  for the regular representation  of $G$ on  the space of smooth sections of $E$, 
and show that on a certain open subset of $G$ of transversal elements it is locally integrable and given by a sum over fixed points. 
\end{abstract}

\maketitle

\tableofcontents

\section{Introduction}

Let $G$ be a real reductive group. 
In classical harmonic analysis  a crucial role is played by the global character of an irreducible admissible  representation $(\nu,\H)$ of $G$ on a Hilbert space $\H$.
It is a distribution $\Theta_\nu:\mathscr{S}(G) \ni f \mapsto \tr \nu(f)\in \C$ on the group given in terms of the trace of the convolution operators
\bqn 
\nu(f)=\int_{G} f(g) \nu(g) \d_{G}(g),
\eqn
where $\mathscr{S}(G)$ denotes certain space of rapidly decaying functions on $G$ and $d_G$ a Haar measure on $G$ \cite[Chapter 8]{wallach}.
By Harish-Chandra's regularity theorem, $\Theta_\nu$ is known to be locally integrable,
and represents a natural generalization of the character of a finite-dimensional representation.
As a consequence of this theorem,  Harish-Chandra  was able to characterize  tempered representations in terms of the growth properties of their global characters,
and fully determine the irreducible $L^2$-integrable representations of $G$. For  a parabolically induced representation,
Atiyah and Bott \cite{atiyah-bott68} interpreted $\Theta_\nu$  in terms of a  fixed point formula,
extending the classical Lefschetz fixed point theorem to geometric endomorphisms on elliptic complexes via pseudodifferential operators. 

Let now $G$ be the split real form of a connected reductive complex algebraic group $\mathbf{G}$ and
$\sigma$  the corresponding anti-holomorphic involution of $\mathbf{G}$.
Consider further the real locus $X$ of a strict wonderful complex algebraic $\mathbf{G}$-variety $\bf{X}$ 
equipped with its {canonical} $\sigma$-equivariant  real structure.
The variety $X$ is, in particular, a projective real algebraic $G_0$-variety,   $G_0$ being  the identity component of $G$.
In this paper, we introduce a  distribution character $\Theta_\pi$ for the regular representation $(\pi,\Cinft(X,E))$ of $G_0$ on the space of smooth sections of 
 a given real differentiable $G_0$-vector bundle $E$ over $X$.
Since the $G$-action on $X$ is not transitive, the corresponding convolution operators $\pi(f)$ are not smoothing,
so that a trace can no longer be defined by restricting their kernels to the diagonal.
Instead,  we show that  the operators $\pi(f)$ can be characterized as totally characteristic pseudodifferential operators (Theorem \ref{thm:2503}), based on   a certain integral transform which we now describe.
 As was shown in  \cite{ACF12},  there is a canonically defined chart on $X$ given by the local structure of $X$ around its unique closed $G_0$-orbit. 
 More specifically, after fixing a Borel subgroup $B\subset G$ and a maximal torus $T\subset B$, consider
  the standard parabolic subgroup $P\supset B$ of $G$ such that $G_0/P_0$ is isomorphic to the closed $G_0$-orbit of $X$,
and let $P=P^uL$ be its Levi decomposition with $T\subset L$.
Then, the aforementioned  canonical chart is given by  $P^u \cdot Z\simeq P^ u\times Z$ where $Z$ is an affine $L$-subvariety of $X$ isomorphic to $\mathbb R^r$, and  $r$ denotes  the rank of $X$.
We now introduce the mentioned integral transform  in terms of the spherical roots of $\bf X$   as a    mapping (Definition \ref{def:spherfourier} and Proposition \ref{prop:spherfourier})
\bqn
\F_{\mathrm{spher}}:\S(P_0) \longrightarrow S^{-\infty}(P^u \cdot Z^\ast \times \R^{s+r})
\eqn
from the Casselman-Wallach space $\S(P_0)$ of rapidly decreasing functions on $P_0$ to the space of smoothing symbols $S^{-\infty}(P^u\cdot Z^\ast \times \R^{s+r})$, where $Z^\ast :=\mklm{z \in Z: z_1 \cdots z_r\not=0}$ and $s$ denotes the dimension of $P^u$.

As a consequence, the microlocal description of the convolution operators $\pi(f)$   allows us to  characterize the singular nature of their kernels,  and to introduce a regularized trace $\Tr_{reg} \pi(f)$ for them.  This yields a distribution  on $G_0$ which is given by  (Theorem \ref{prop:tracereg})
$$
\Theta_\pi:\CT(G_0) \ni f \mapsto \Tr_{reg} \pi(f)\in \C,
$$ 
 $\CT(G_0)$ denoting the space of smooth functions on $G_0$ with compact support. We call $\Theta_\pi$ the \emph{global character} of the representation $\pi$. It should be emphasized that this distribution is given in terms of the spherical roots of ${\bf X}$, and therefore encodes a large part of the structure of $X$. We then prove that on a certain open set of transversal elements $G_0(X)\subset G_0$ the distribution  $\Theta_\pi$ is locally integrable  and given by (Theorem \ref{thm:FPF})
\bqn 
\Theta_\pi(f)=\int_{G_0(X)} f(g) \Tr^\flat \pi(g) d_G(g), \qquad f \in \CT(G_0(X)),
\eqn
$d_G$ being a Haar measure on $G$ and $\Tr^\flat \pi(g)$  the flat trace of $\pi(g)$.
Writing  $\Phi_g(x):=g^{-1} \cdot x$, the latter can be expressed as a sum over fixed points 
\bqn 
\Tr^\flat \pi(g)=  \sum_{x \in \Fix(X,g)}     \frac{\Tr \, (g:E_x \rightarrow E_x) }{ |\det (\1-d\Phi_{g}(x))|}
\eqn
which  is manifestly   invariant under conjugation.\\

The  global characters $\Theta_\pi$ introduced in this paper are expected to be relevant in the context of harmonic and global analysis on spherical varieties,
and  the authors intend to study their properties in detail in the future.  One of the questions to be dealt with is  the invariance under conjugation of  the distributions $\Theta_\pi$ on the entire group $G_0$. 
Further, it would be interesting  to compare the regularized trace introduced in this paper with other existing trace concepts, to compute it for specific representations $(\pi,\Cinft(X,E))$, and to examine the distributions $\Theta_\pi$ in light of the Harish-Chandra theory of invariant eigendistributions. We also intend to characterize the representations $(\pi,\Cinft(X,E))$ and their wave front sets  more closely  for specific vector bundles. In a  different direction, it would also be interesting to study  the resolvent of a strongly elliptic operator on $X$ together with its meromorphic continuation,  and to examine the action of discrete subgroups on $X$ and associated representations. 

This paper is based on the local structure theorem for  real loci of 
strict wonderful $\mathbf{G}$-varieties recently proved by Akhiezer and Cupit-Foutou \cite{ACF12},
and  generalizes results already obtained by Parthasarathy and Ramacher \cite{parthasarathy-ramacher14} for the Oshima compactification of a Riemannian symmetric space,
as well as earlier work of Ramacher \cite{ramacher06}.
Actually, the mentioned local structure theorem provides a natural framework in which the  previous results  can be understood in a conceptually  simple way.

\section{Wonderful varieties}
\label{sec:wond.var}

Throughout this article we shall adopt the convention of writing complex objects with boldface letters and the corresponding real objects with ordinary ones.
Let $G$ be the split real form of a connected reductive complex algebraic group $\mathbf{G}$ of rank $l$,
and let $\sigma:\mathbf{G}\rightarrow \mathbf{G}$ be the anti-holomorphic involution defining the split real form $G$, 
so that $G=\mathbf{G}^{\sigma}=\mklm{g \in \mathbf{G}: \sigma(g)=g}$.
In particular, $G$ is a real reductive group. Since $G$ is   not necessarily connected, denote by  $G_0$ the identity component of $G$.
Fix a maximal algebraic torus $\mathbf{T}\simeq(\C^\ast)^l$ of $\mathbf{G}$ and a Borel subgroup $\mathbf{B}$ of $\mathbf{G}$ containing it. 
We further assume that $\mathbf{B}$  and $\mathbf{T}$ are both $\sigma$-stable. We recall the definition of a wonderful $\mathbf{G}$-variety. 

\begin{definition}\emph{(\cite{luna96})}
An algebraic $\mathbf{G}$-variety $\mathbf{X}$ is called \emph{wonderful of rank $r$} if
\begin{enumerate}
\item $\mathbf{X}$ is projective and smooth;
\item $\mathbf{X}$ admits an 
open $\mathbf{G}$-orbit whose complement 
consists of a finite union of smooth prime divisors $\mathbf{X}_1,\ldots, \mathbf{X}_r$ with
normal crossings;
\item the $\mathbf{G}$-orbit closures of $\mathbf{X}$ 
are given by the partial intersections of the $\mathbf{X}_i$.
\end{enumerate}
\end{definition}

In particular, notice that $\mathbf{X}$ has a unique closed, hence projective $\mathbf{G}$-orbit, 
given by the intersection of all prime divisors ${\bf X}_i$. 
Moreover, the $\mathbf G$-variety $\mathbf X$ is spherical that is, $\mathbf B$ has an open orbit in $\mathbf X$ \cite{luna96}.

Further, recall that a  real structure on $\mathbf{X}$  is an involutive anti-holomorphic map 
$\mu:\mathbf{X} \rightarrow \mathbf{X}$; 
it is said to be \emph{$\sigma$-equivariant} if $\mu(g\cdot x)=\sigma(g)\cdot\mu(x)$ for all $(g,x)\in \mathbf{G}\times \mathbf{X}$.
Crucial for the ensuing analysis is the existence of a unique $\sigma$-equivariant real structure (called canonical) on some wonderful varieties. 
Wonderful varieties whose points have self-normalizing stabilizers are called \emph{strict}.
For such varieties one has the following 

\begin{theorem}\cite{ACF12}
Let  $\mathbf{X}$ be a strict wonderful $\mathbf{G}$-variety. Then
\begin{enumerate}
 \item there exists a unique $\sigma$-equivariant real structure $\mu$ on $\mathbf{X}$; 
\item the real locus $X$  of $(\mathbf{X},\mu)$ is not empty, and constitutes a smooth projective real algebraic $G_0$-variety with finitely many $G_0$-orbits,
finitely many open $B$-orbits and a unique closed $G_0$-orbit.
\end{enumerate}
\end{theorem}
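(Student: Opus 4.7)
The plan is to split the statement into three pieces: uniqueness of $\mu$, existence of $\mu$, and the structural properties of $X$. Uniqueness will follow from the defining feature of a \emph{strict} wonderful variety, existence will appeal to the Luna classification of wonderful $\mathbf{G}$-varieties by spherical systems, and the properties of $X$ will be read off from those of $\mathbf{X}$ together with a Galois-cohomology count of real forms of orbits.

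For \textbf{uniqueness}, suppose $\mu_1$ and $\mu_2$ are both $\sigma$-equivariant anti-holomorphic involutions on $\mathbf{X}$. Then $\phi := \mu_1 \circ \mu_2$ is a $\mathbf{G}$-equivariant holomorphic automorphism of $\mathbf{X}$. Since $\mathbf{X}$ is wonderful, such an automorphism is determined by its restriction to the open $\mathbf{G}$-orbit $\mathbf{G}/\mathbf{H}$, and this restriction corresponds to an element of $N_{\mathbf{G}}(\mathbf{H})/\mathbf{H}$. The strictness assumption says exactly that $\mathbf{H}$ is self-normalizing, so this quotient is trivial and $\phi = \id$. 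For \textbf{existence}, I would use that a wonderful $\mathbf{G}$-variety is determined up to $\mathbf{G}$-equivariant isomorphism by its spherical system (Luna's conjecture, proved by Losev and by Bravi--Pezzini, with the strict case settled by Cupit-Foutou). Twisting the $\mathbf{G}$-action on $\mathbf{X}$ through $\sigma$ produces a new wonderful variety $\mathbf{X}^\sigma$; since $\mathbf{B}$ and $\mathbf{T}$ are $\sigma$-stable, its spherical system agrees with that of $\mathbf{X}$, and so one obtains a $\mathbf{G}$-equivariant biholomorphism $\mathbf{X}\to \mathbf{X}^\sigma$ which unwinds to a $\sigma$-equivariant anti-holomorphic self-map $\mu$ of $\mathbf{X}$. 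Its square is $\mathbf{G}$-equivariant and holomorphic, so by the uniqueness step equals $\id$. The main obstacle here is the verification of the $\sigma$-invariance of the full spherical system (spherical roots, colors with their stabilizers), which requires a careful inspection of the canonical data attached to $\mathbf{X}$ and is where the bulk of the work in \cite{ACF12} lies.

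For the \textbf{properties of $X$}, the unique closed $\mathbf{G}$-orbit is projective and homogeneous, hence of the form $\mathbf{G}/\mathbf{P}$ with $\mathbf{P}$ a standard parabolic; being intrinsically characterized it is $\mu$-stable, so $\mathbf{P}$ is $\sigma$-stable. For the split form $G$, the coset $e\mathbf{P}$ is a real point, which shows $X\ne\emptyset$; smoothness is the general fact that the fixed-point set of an anti-holomorphic involution on a complex manifold is a real submanifold, and projectivity descends from $\mathbf{X}$. For finiteness of $G_0$-orbits, each $\mathbf{G}$-orbit on $\mathbf{X}$ is $\sigma$-stable (uniqueness of $\mu$ forces this orbit-by-orbit), and the $G$-orbits on the real points of a $\sigma$-stable homogeneous space $\mathbf{G}/\mathbf{H}$ are classified by the Galois cohomology set $H^1(\langle\sigma\rangle,\mathbf{H})$, which is finite; summing over the finitely many $\mathbf{G}$-orbits in $\mathbf{X}$ and passing to $G_0$ preserves this. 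The same argument, applied to the finitely many open $\mathbf{B}$-orbits guaranteed by the sphericity of $\mathbf{X}$, produces finitely many open $B$-orbits on $X$. Uniqueness of the closed $G_0$-orbit is then immediate: it must lie in the real locus of the unique closed $\mathbf{G}$-orbit $\mathbf{G}/\mathbf{P}$, and for $G$ split with $\mathbf{P}$ parabolic this locus is the connected variety $G_0/P_0$, which constitutes a single $G_0$-orbit.
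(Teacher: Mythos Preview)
The paper does not supply its own proof of this theorem: it is stated with the citation \cite{ACF12} and closed immediately with a \qed. There is therefore no in-paper argument to compare your proposal against. Your sketch is broadly along the lines such results are proved, and, as you yourself acknowledge, the substantive step (verifying that the full spherical system is $\sigma$-invariant so that the classification can be invoked) is exactly the work carried out in \cite{ACF12}.

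Two places where your outline is loose. First, sphericity gives a \emph{single} open $\mathbf{B}$-orbit, not finitely many; the finitely many open $B$-orbits on $X$ arise as the real forms of that one complex orbit via your Galois-cohomology count. Second, your argument for the uniqueness of the closed $G_0$-orbit tacitly assumes that any closed $G_0$-orbit in $X$ already lies inside the real locus of the closed $\mathbf{G}$-orbit $\mathbf{G}/\mathbf{P}$. This is true but not immediate: a $G_0$-orbit that is closed in $X$ could a priori sit inside a non-closed $\mathbf{G}$-orbit of $\mathbf{X}$. A clean way to close this gap is via the local structure theorem (Theorem~\ref{thm:ACF} here): every $G_0$-orbit meets the slice $Z\simeq\R^r$, and the $T$-action on $Z$ degenerates every point to $0$, which lies in the closed orbit, so every $G_0$-orbit closure contains that one. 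Since that theorem is likewise imported from \cite{ACF12}, within the present paper the entire statement is simply taken as a black box.
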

\qed

Examples of real loci of strict wonderful varieties include the Oshima-Sekiguchi compactification of a Riemannian symmetric space;
such a compactification can be realized as the real locus of the De-Concini-Procesi wonderful compactification of the complexification of the given symmetric space,
up to a finite quotient, see \cite[Chapter 8, Section II.14]{borel-ji}.

In what follows, let  $X$ be the real locus of a strict wonderful $\mathbf{G}$-variety of rank $r$ equipped with its canonical $\sigma$-equivariant real structure.
Let $Y$ denote the  unique closed $G_0$-orbit of $X$, 
and consider the  parabolic subgroup $P={\bf P}^\sigma\supset B={\bf B}^\sigma$ of $G$ such that $Y\simeq G_0/P_0$.
Let $P=P^uL$ be the Levi decomposition of $P$ with $T={\bf T}^\sigma\subset L$.
Notice that $P^u$ is connected. 

The  following local structure theorem describes the structure of the real locus $X$ locally around $Y$, and will be essential for everything that follows.
It constitutes the real analogue of Theorem 1.4 in \cite{BLV86}.

\begin{theorem}\cite[Section 5]{ACF12}
\label{thm:ACF}
There exists a real algebraic $L$-subvariety $Z$ of $X$ such that
\begin{enumerate}
\item the natural mapping 
\bqn
P^u\times  Z\rightarrow P^u
\cdot Z
\eqn
is a $P^u$-equivariant isomorphism;
\item
each $G_0$-orbit in $X$ contains points of  $Z$;
\item
the commutator of $L$ acts trivially on $Z$; 
furthermore, $Z$ is an affine $T$-variety  isomorphic to $\mathbb R^r$ acted upon  by linearly independent characters of $T$.
\end{enumerate}
\end{theorem}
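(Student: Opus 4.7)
The plan is to deduce this real local structure theorem from the complex Brion--Luna--Vust theorem (as stated in \cite{BLV86}) by descending along the $\sigma$-equivariant real structure. Because $\mathbf{B}$ and $\mathbf{T}$ are both $\sigma$-stable by assumption, the standard parabolic $\mathbf{P}$ containing $\mathbf{B}$ with $\mathbf{G}/\mathbf{P}$ isomorphic to the closed $\mathbf{G}$-orbit of $\mathbf{X}$ is $\sigma$-stable, and so are its Levi factor $\mathbf{L}$ and unipotent radical $\mathbf{P}^u$; this is the structural input that will allow the whole construction to descend.

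First, I would apply the complex local structure theorem to $\mathbf{X}$ around its unique closed $\mathbf{G}$-orbit $\mathbf{Y}$. This yields an affine $\mathbf{L}$-stable slice $\mathbf{Z}\subset\mathbf{X}$ such that the multiplication map $\mathbf{P}^u\times\mathbf{Z}\to\mathbf{P}^u\cdot\mathbf{Z}$ is a $\mathbf{P}^u$-equivariant isomorphism onto an open $\mathbf{B}$-stable neighborhood of $\mathbf{Y}$, with $\mathbf{Z}\cong\C^r$ as a $\mathbf{T}$-module whose weights are the opposites of the spherical roots of $\mathbf{X}$ (hence linearly independent, as a defining property of spherical roots for wonderful varieties). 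Second, one must check that $\mathbf{Z}$ can be chosen $\mu$-stable: the slice $\mathbf{Z}$ can be realized canonically as the fiber over $\mathbf{y}_0\in\mathbf{Y}^\mu$ of the retraction $\mathbf{P}^u\cdot\mathbf{Z}\to\mathbf{Y}$, and since $\mathbf{P}^u$, $\mathbf{L}$, and a suitable base point in $\mathbf{Y}^\mu$ can all be chosen $\sigma$- resp.\ $\mu$-invariant, $\mu(\mathbf{Z})=\mathbf{Z}$. Setting $Z:=\mathbf{Z}^\mu$ produces a real algebraic $L$-subvariety of $X$, and taking $\mu$-fixed points in the complex isomorphism yields the $P^u$-equivariant isomorphism $P^u\times Z\simeq P^u\cdot Z$ of (1), since $\mathbf{P}^u$ is connected and hence $(P^u)=(\mathbf{P}^u)^\sigma$ is well behaved under real points.

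For (3), the commutator $[\mathbf{L},\mathbf{L}]$ acts trivially on $\mathbf{Z}$ in the complex situation, so its real points act trivially on $Z$. The $\mathbf{T}$-action on $\mathbf{Z}\cong\C^r$ via linearly independent characters restricts, for the split form $T$, to a $T$-action on $Z$ by real characters on $\R^r$ which remain linearly independent; the key point is that since $G$ is split, the relevant weight spaces are defined over $\R$ and the real exponentials of the spherical roots give genuine $\R^\times$-actions on the real coordinates, so $Z\simeq\R^r$ as a $T$-variety. Property (2) is the subtlest point and the main obstacle of the argument: in the complex setting it follows from $\mathbf{G}\cdot\mathbf{Z}=\mathbf{X}$ combined with the fact that the partial intersections of the boundary divisors $\mathbf{X}_i$ exhaust all $\mathbf{G}$-orbits and each contains a $\mathbf{T}$-fixed point in $\mathbf{Z}$. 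To transfer this to $G_0$-orbits on $X$, I would use the previously cited result that $X$ has only finitely many $G_0$-orbits, together with a Galois-cohomological argument: a $\mathbf{G}$-orbit $\mathbf{O}\subset\mathbf{X}$ decomposes its $\mu$-fixed part $\mathbf{O}^\mu$ into a finite number of $G_0$-orbits parametrized by a pointed $H^1$-set, each of which must meet $Z=\mathbf{Z}^\mu$ because $Z$ hits every boundary stratum of $X$ corresponding to the $\mathbf{T}$-fixed points of $\mathbf{Z}\cong\C^r$, whose real counterparts in $\R^r$ lie in each face of the coordinate hyperplane arrangement.

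The hard part is the orbit-hitting statement (2), since it is not a purely local fact around $Y$ but a global statement about the $G_0$-orbit stratification of $X$; one needs to combine the complex version $\mathbf{G}\cdot\mathbf{Z}=\mathbf{X}$ with the Galois-theoretic analysis of real forms of individual $\mathbf{G}$-orbits to rule out $G_0$-orbits disjoint from $Z$. Once this is in place, (1) and (3) follow essentially formally from taking real points in the Brion--Luna--Vust decomposition.
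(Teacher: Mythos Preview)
The paper does not supply a proof of this theorem: it is quoted verbatim from \cite[Section 5]{ACF12} and closed with a bare \qed. There is therefore no argument in the present paper to compare your sketch against. Your outline---descend the complex Brion--Luna--Vust slice along the canonical $\sigma$-equivariant real structure, using $\sigma$-stability of $\mathbf{B}$, $\mathbf{T}$, $\mathbf{P}$, $\mathbf{L}$, $\mathbf{P}^u$, and then handle the orbit statement (2) by a Galois-cohomological analysis of how a complex $\mathbf{G}$-orbit breaks into real $G_0$-orbits---is a reasonable blueprint and is in the spirit of what the cited reference does, but any assessment of its correctness has to be made against \cite{ACF12} rather than against this paper.

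One caution on your sketch of (2): the claim that each $G_0$-orbit meets $Z$ does not follow from $\mathbf{G}\cdot\mathbf{Z}=\mathbf{X}$ plus a count of real forms alone; you genuinely need that the $2^r$ sign chambers of $Z\simeq\R^r$ already witness the different $G_0$-orbits inside a given $\mathbf{G}$-orbit, which is a statement about the specific real structure on the spherical roots and not just abstract $H^1$. You flag this as the hard step, which is correct, but the argument you indicate (``real counterparts in $\R^r$ lie in each face of the coordinate hyperplane arrangement'') would need to be tightened to an actual bijection between sign patterns and real forms of the stabilizer before it proves (2).
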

\qed

Let $\gamma_1,\ldots,\gamma_r$ be the characters of $T$ mentioned in Theorem \ref{thm:ACF}.
These weights are usually called the \emph{spherical roots of $\mathbf{X}$}.
The $T$-action on $Z\simeq \R^r$ is then given explicitly by 
\bq
\label{eq:3}
t \cdot z=(\gamma_1(t)z_1, \dots ,\gamma_r(t)z_r) \qquad \text{for all} \quad z=(z_1, \dots ,z_r) \in Z \quad \text{and} \quad t \in {T}.
\eq
Note that $P^u$ acts on $P^u \cdot Z$  by multiplication from the left on  $P^u$.
Further,  since $L$ normalizes $P^u$, $L$ acts on $P^u\cdot Z$ by setting
\bq
\label{eq:actions}
l \cdot (p_u,z)=(lp_ul^{-1}, lz) \in P^u\times Z\simeq P^u\cdot  Z, \qquad (p_u,z) \in P^u \times Z, \, l \in L,
\eq
 while $(L,L)$ acts trivially on $Z$.  Consequently, $P$ acts on $P^u \cdot Z$. By Theorem \ref{thm:ACF}, $P^u\cdot Z$ is an open subset of $X$ isomorphic to $ \R^s \times \R^r\simeq \R^{s+r}$, 
$P^u$ being diffeomorphic to $\R^s$ for some $s$, and  $G_0 \,  P^u\cdot Z=X$. We can therefore cover $X$ by the $G_0$-translates 
\bqn 
U_g:=g\cdot  U_e, \qquad U_e:= P^u\cdot Z, \quad g \in G_0.
\eqn
Consequently,  there exist a real-analytic diffeomorphism 
\bqn
\label{eq:16.03.2015}
\varphi:\quad \R^{s+r} \quad \longrightarrow \quad P^u \times Z \simeq P^u \cdot Z 
\eqn
and  real-analytic diffeomorphisms $\phi_g$
\bqn 
\phi_g: \quad \R^{s+r} \quad  \stackrel{\phi}{\longrightarrow} \quad   P^u \cdot Z \quad \stackrel{g}{\longrightarrow} \quad g \, P^u \cdot Z, \qquad g \in G_0,
\eqn
such that   $\mklm{(U_g,\varphi_g^{-1})}_{g\in G_0}$ constitutes an atlas of $X$.
More explicitly, if $z_j$ denotes the $j$-th coordinate function on $Z\simeq \R^r$ and $p_j$ the $j$-th  coordinate function on $P^u\simeq \R^s$, we  write
 \bq
 \label{eq:coord}
\phi_g^{-1}:  U_g \ni x \, \longmapsto \, (p_1,\dots ,p_s,z_1,\dots ,z_r)=y\in \R^{s+r}.
\eq
Note that $U_g$ is invariant under the subgroups $gTg^{-1}$ and $gP^ug^{-1}$.  In the following, $U_e$ will be called the \emph{canonical chart}.  

Next, let $g \in G_0$, $x\in U_g$, and  $h\in G_0$ be such that $h \cdot x \in U_g$. 
From the orbit structure and the analyticity of $X$  one immediately deduces 
\bq
\label{eq:chi}
z_j(h\cdot x)= \chi_j(h,x)z_j(x) , 
\eq
 where $\chi_j(h,x)$ is a function that is real-analytic in $h$ and  $x$ that  does not vanish.
We are interested in a more explicit description of the functions $\chi_j(h,x)$.

\begin{corollary} 
\label{lemma:char}
 For  any  $t\in T$, $u \in P^u$,  $x \in  U_g$, and  $j=1,\dots, r$  we have
\begin{enumerate}
\item[(a)] $z_j(gtg^{-1} \cdot x)= \chi_j(gtg^{-1},x) z_j(x) =\gamma_j(t)z_j(x)$,
\item[(b)] $z_j(gu g^{-1} \cdot x)=z_j(x)$.
 \end{enumerate}
\end{corollary}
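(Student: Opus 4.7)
The plan is to exploit the product structure of the canonical chart $U_e \simeq P^u \times Z$ together with the explicit formulas \eqref{eq:3} and \eqref{eq:actions} for the $T$- and $P^u$-actions on $P^u \cdot Z$. By the very definition of $\phi_g$ as $g \circ \phi$, for any $x \in U_g$ we may write $x = g \cdot y$ with $y \in U_e$ corresponding to a pair $(p_u, z) \in P^u \times Z$, and the coordinate $z_j(x)$ read off via $\phi_g$ coincides with the $j$-th coordinate of $z \in Z \simeq \R^r$. Also, by the remark preceding the statement, $U_g$ is invariant under both $gTg^{-1}$ and $gP^u g^{-1}$, so both sides of the claimed identities lie in $U_g$ and the functions $\chi_j(\cdot, x)$ are indeed defined at $h = gtg^{-1}$ and $h = gug^{-1}$ via \eqref{eq:chi}.

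For part (a), I would write
\bqn
gtg^{-1} \cdot x \;=\; g \cdot (t \cdot y),
\eqn
and apply \eqref{eq:actions} with $l = t \in T \subset L$, which yields $t \cdot (p_u, z) = (t p_u t^{-1},\, t \cdot z)$. By \eqref{eq:3}, the $j$-th $Z$-component of $t \cdot z$ is $\gamma_j(t) z_j$. Transporting back through $\phi_g$ then gives $z_j(gtg^{-1} \cdot x) = \gamma_j(t) z_j(x)$, and comparing with the defining relation \eqref{eq:chi} forces $\chi_j(gtg^{-1}, x) = \gamma_j(t)$.

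For part (b), the same decomposition produces $gug^{-1} \cdot x = g \cdot (u \cdot y)$. Since $P^u$ acts on $P^u \cdot Z \simeq P^u \times Z$ purely by left multiplication on the first factor, one has $u \cdot (p_u, z) = (u p_u,\, z)$; the $Z$-component is untouched. Reading this off in the chart $\phi_g$ yields $z_j(gug^{-1} \cdot x) = z_j(x)$, as claimed.

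There is no serious obstacle here; the corollary is a direct unwinding of Theorem \ref{thm:ACF} combined with the explicit action formulas. The only point that demands care is a bookkeeping check: one must confirm that the chart $\phi_g = g \circ \phi$ intertwines the translation by conjugates of elements of $T$ and $P^u$ on $U_g$ with the corresponding $T$- and $P^u$-actions on $U_e$. This compatibility is precisely what is supplied by items (1) and (3) of Theorem \ref{thm:ACF}, so once the decomposition $x = g \cdot (p_u, z)$ is fixed, both identities follow by pure inspection of the $Z$-factor.
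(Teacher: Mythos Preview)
Your argument is correct and follows essentially the same route as the paper: write $x = g\, p_u \cdot z$, reduce the action of $gtg^{-1}$ (resp.\ $gug^{-1}$) on $U_g$ to the action of $t$ (resp.\ $u$) on $U_e \simeq P^u \times Z$, and read off the $Z$-component using \eqref{eq:3}, \eqref{eq:actions}, and Theorem~\ref{thm:ACF}. The paper's proof is just a terser version of what you wrote.
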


\begin{proof}
Assertion (a) follows readily from (1) and the definition of the open sets $U_g$. Indeed, let $x=g p\cdot z\in U_g$ and $t\in T$. Then 
\bqn 
\phi_{g}^{-1}(gtg^{-1} \cdot x)=\phi^{-1}(t p\cdot z)=\phi^{-1}(t p t^{-1}, t \cdot z),
\eqn
so that the $z_j$-coordinate of $gtg^{-1} \cdot x$ reads $\gamma_j(t) z_j(x)$.  Assertion (b) is a direct  consequence of Theorem \ref{thm:ACF}.
\end{proof}

From  the classification results  of~\cite{BCF} (see precisely the list of Section 5 therein) and \cite{ressayre} (Theorem A), one immediately infers  that every wonderful $\mathbf{G}$-variety $\mathbf X$ 
whose $\mathbf T$-fixed-points are located on its closed $\mathbf G$-orbit is strict.
Even if we shall not restrict our attention to these varieties later,
we would like to close this section by mentioning that for such varieties one can construct a more refined atlas than the one given above.  
Indeed, denote by 
\bqn 
W:=N_G(T)\slash Z_G(T)
\eqn
the Weyl group of $G$ with respect to  $T$, and write $(U_w, \phi_{w}^{-1}):= (U_{n_w}, \phi^{-1}_{n_w})$ for any element $w\in W$,
$n_w\in N_G(T)$ being a representative of $w$. Note that by definition of the Weyl group  $U_w$ is independent of the representative $n_w$.
Since $ n_wTn_w^{-1}=T $,  $U_w$ carries   a natural $T$-action.  We then have the following

\begin{proposition}Suppose that $\mathbf X$ is a wonderful $\mathbf{G}$-variety such that its $\mathbf T$-fixed-points are located on its closed $\mathbf G$-orbit. Then 
$$
\mklm{(U_w,\phi^{-1}_w)}_{w\in W}
$$ 
constitutes  a finite atlas of $X$.
\end{proposition}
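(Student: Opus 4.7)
The plan is to reduce the statement to the assertion that $X=\bigcup_{w\in W}U_w$; once this set-theoretic covering is established, the atlas property follows immediately from the fact that each $\phi_w^{-1}\colon U_w\to\R^{s+r}$ is a real-analytic diffeomorphism by construction (applying $\varphi^{-1}$ after translation by $n_w^{-1}$), and the finiteness is automatic because $W$ is finite. So the whole task is to verify the covering.

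The key idea is a Borel-type fixed-point argument that I would first run on the complex variety $\mathbf{X}$ and then push down to real loci. First I would observe that each $\mathbf{U}_w:=n_w\cdot\mathbf{P}^u\cdot\mathbf{Z}$ is $\mathbf{T}$-stable: indeed $\mathbf{T}\subset\mathbf{L}$ normalizes $\mathbf{P}^u$ and preserves $\mathbf{Z}$ by Theorem~\ref{thm:ACF}, so $\mathbf{U}_e$ is $\mathbf{T}$-stable, and since $n_w\in N_{\mathbf{G}}(\mathbf{T})$ we have $\mathbf{T}\cdot\mathbf{U}_w=n_w(n_w^{-1}\mathbf{T}n_w)\mathbf{U}_e=n_w\mathbf{T}\mathbf{U}_e=\mathbf{U}_w$. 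Next, the base point $\mathbf{y}_0\in\mathbf{Y}$ (the image of the identity coset in $\mathbf{G}/\mathbf{P}$) corresponds to $(e,0)\in\mathbf{P}^u\times\mathbf{Z}$ and hence lies in $\mathbf{U}_e$; translating, $n_w\cdot\mathbf{y}_0\in\mathbf{U}_w$. Finally, the $\mathbf{T}$-fixed points of the flag variety $\mathbf{Y}=\mathbf{G}/\mathbf{P}$ are exactly the classes $n_w\mathbf{P}$ as $w$ ranges over $W$ (with each point appearing $|W_{\mathbf{L}}|$ times), so the hypothesis that every $\mathbf{T}$-fixed point of $\mathbf{X}$ lies on $\mathbf{Y}$ forces every such point into some $\mathbf{U}_w$.

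Now consider the closed $\mathbf{T}$-stable subvariety $C:=\mathbf{X}\setminus\bigcup_{w\in W}\mathbf{U}_w$ of the projective variety $\mathbf{X}$. By the previous paragraph $C$ contains no $\mathbf{T}$-fixed point, so by the Borel fixed-point theorem $C$ must be empty, proving $\mathbf{X}=\bigcup_{w\in W}\mathbf{U}_w$. To descend to real points I would choose each representative $n_w$ inside $N_{G}(T)$, which is possible because $G$ is split so that $N_{\mathbf{G}}(\mathbf{T})^\sigma$ surjects onto $W$; this choice makes $\mathbf{U}_w$ invariant under the canonical real structure $\mu$ since $\mu(n_w\cdot x)=\sigma(n_w)\cdot\mu(x)=n_w\cdot\mu(x)$ and $\mathbf{U}_e$ is $\mu$-stable. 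Taking $\mu$-fixed points of the complex covering then yields $X=\bigcup_{w\in W}U_w$, completing the argument.

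The main obstacle I anticipate is not the Borel-type argument itself but the bookkeeping at the real/complex interface: one must verify that the Weyl-group representatives can be taken in $G$, that the real locus of the complex covering coincides with the covering by the $U_w$, and that no $\mathbf{T}$-fixed point is lost when one restricts to $X$. The splitness of $G$ and the $\sigma$-equivariance provided by Theorem~\ref{thm:ACF} together handle these points; once they are in place the proof is essentially a one-line invocation of the fixed-point theorem.
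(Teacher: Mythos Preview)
Your argument is correct and takes a genuinely different route from the paper's. The paper invokes the Bialynicki--Birula decomposition: it picks a one-parameter subgroup $\eta$ of $\mathbf T$ whose fixed points coincide with $\mathbf T^{\mathbf X}$, obtains the associated cell decomposition of $\mathbf X$ indexed by those fixed points, identifies $\mathbf P^u\cdot\mathbf Z$ with the attracting cell of $y_0$ (citing \cite{ACF12}), and then observes that the remaining cells, being attached to the points $n_w\cdot y_0$, are contained in the translates $n_w\cdot\mathbf P^u\cdot\mathbf Z$. You instead argue directly on the complement $C=\mathbf X\setminus\bigcup_w\mathbf U_w$: it is closed, projective, $\mathbf T$-stable, and by hypothesis contains no $\mathbf T$-fixed point, hence is empty by Borel's fixed-point theorem.

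Your approach is more elementary, since it avoids the Bialynicki--Birula machinery and the (not entirely spelled-out) step in the paper identifying each BB cell with a subset of the appropriate $W$-translate of the canonical chart. The paper's approach, on the other hand, gives additional structural information you do not extract: an actual cell decomposition of $X$, and the identification of $P^u\cdot Z$ as a specific attracting set, which explains conceptually why these charts are the ``right'' ones. Your descent to real points is also more carefully argued than the paper's, which simply asserts that the BB decomposition of $\mathbf X$ yields one of $X$; your use of $\sigma$-equivariance and splitness to choose $n_w\in N_G(T)$ and conclude $\mathbf U_w^\mu=U_w$ makes this passage explicit.
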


\begin{proof}
Let $B^-$ denote the Borel subgroup of $G$ such that $B\cap B^-=T$.
The variety $X$ has a unique projective $G$-orbit and, hence,   a unique  point  fixed by $B^-$ \cite{ACF12}.
This fixed point, denoted in the following by $y_0$,  lies in the closed $G$-orbit by assumption.
Next, let $\eta:s\mapsto (s^{a_1},..., s^{a_l})$, $a_i>0$,  be  a morphism from $\C^*$ to the algebraic torus  ${\bf T}\simeq (\C^*)^l$,
such that the  set of $\mathbf{T}$-fixed-points in $\bf{X}$ coincides with the set of fixed points of  $\mklm{\eta(s)}_{s\in \C^\ast}$  in $\bf{X}$.
By \cite{BB}, there is  a cell decomposition of $\mathbf X$ and, consequently, of  $X$ in terms of  the sets 
$$\{x\in X:\lim_{\R^\ast  \ni s\rightarrow 0}\eta(s)\cdot x=y\},
$$
where $y$ runs over the set of $T$-fixed-points of $X$. Furthermore, the open subset $P^u \cdot Z\subset X$ is given by the cell 
\bqn 
P^u \cdot Z=\{x\in X:  \lim_{\R^\ast\ni s\rightarrow 0}\eta(s)\cdot x=y_0\}, 
\eqn
see~\cite{ACF12} for details. By assumption, all $T$-fixed-points belong to the closed $G$-orbit of $X$.
On the other side, it is well-known that the $T$-fixed-points of a projective $G$-orbit are indexed by the Weyl group $W$.
More specifically, for each such $y$ there exists a $w\in W$ such that $y=n_w y_0$ for any representative $n_w\in N_G(T)$ of $w$.
Noticing that the aforementioned cells are just contained in the $W$-translates of $P^u \cdot Z$, one finally obtains the proposition.
\end{proof}

\begin{remark}
The atlas from the previous proposition is a generalization of that constructed by  Oshima for the compactification of a Riemannian symmetric space of non-compact type, see \cite{oshima78}.
\end{remark}

\section{Microlocal analysis of integral operators on wonderful varieties}
\label{Sec:int op}

With the notation as in  Section \ref{sec:wond.var}, let $\mathbf{G}$ be a connected reductive algebraic group over $\C$ of rank $l$ with split real form $(G,\sigma)$. 
Let $\mathbf{X}$ be a strict wonderful $\mathbf{G}$-variety of rank $r$ and $X$ the real locus of $\mathbf{X}$ with respect to the canonical real structure on it. 
Consider now a real differentiable $G$-vector bundle $E$ on $X$ of rank $d$ and
the corresponding regular representation of $G_0$ on the space of smooth sections $\Cinft(X,E)$ of $E$ given by
\bqn 
\pi(g) s(x) =g\cdot [s(g^{-1} \cdot x)], \qquad x \in X, \, g \in G_0, \, s \in \Cinft(X,E).
\eqn
 
  Let $(L,\Cinft (G_0))$ be the left-regular representation of $G_0$ and  $\theta$   a Cartan involution on  $\g$. With respect to the left-invariant Riemannian metric on $G_0$ given by  the modified Cartan-Killing form 
 \bqn
 \langle A,B \rangle_{\theta}:=-\eklm{A, \theta B}, \qquad A,B \in \g,
 \eqn
  we denote by  $d(g,h)$ the distance between two points $g,h \in G_0$, and set $|g|=d(g,e)$, where $e$ is the identity element of $G$.
A function $f$ on $G_0$ is said to be of \emph{of moderate growth}, if there exists a $\kappa>0$ such that $|f(g)| \leq C e^{\kappa|g|}$ for some constant $C>0$.
Let further $d_{G}$ be a Haar measure on $G$, and denote by $\U$ the universal envelopping algebra of the complexification of the Lie algebra $\g$ of $G$. 
We introduce now the \emph{Casselman-Wallach space}  of  rapidly decreasing functions on $G_0$ \cite{casselman89, wallach83}.

\begin{definition}
A function $f \in \Cinft(G_0)$ is called \emph{rapidly decreasing} if it satisfies the following condition:
For every $\kappa \geq 0$ and $H \in \U$ there exists a constant $C > 0$ such that 
	$$|dL(H)f(g)| \leq C e^{-\kappa |g|}.$$ 
The space of rapidly decreasing functions on $G_0$ will be denoted by $\S(G_0)$.
\end{definition}

\begin{remark}

1) Note that $f\in \S(G_0)$ implies that  for every $\kappa \geq 0$ and $H \in \U$ one has 
$$
dL(H)f \in \L^1(G_0,e^{\kappa|g|}d_{G}).
$$
 Indeed, let $c>0$ be such that $e^{-c|g|}\in \L^1(G_0,d_{G})$, and  $\kappa \geq 0$ and $X \in \U$ be given.
Then  $|e^{(\kappa+c)|g|} dL(X)f(g) | \leq C$ for all $g \in G_0$ and a suitable constant $C>0$, so that 
\bqn 
\big \|dL(X) f e^{\kappa|\cdot|}\big \|_{\L^1(G_0,d_{G})}\leq C \, \big \| e^{-c|\cdot|}\big \|_{\L^1(G_0,d_{G})}<\infty.
\eqn

2) If $f \in \S(G_0)$, $dR(X) f\in \S(G_0)$.
Furthermore, if one compares the space $\S(G_0)$ with the Fr\'echet spaces $\mathscr{S}_{a,b}(G_0)$ defined in \cite[Section 7.7.1]{wallach},
where $a$ and $b$ are smooth, positive, $K$-bi-invariant functions on $G_0$ satisfying certain properties,
 one easily sees  that  $a(g)=e^{|g|}$ and $b(g)=1$ satisfy the selfsame properties, except for the smoothness at $g=e$ and the $K$-bi-invariance of $a$.
Besides,  it should be noticed that the space $\S(G)$ is different from the Schwartz space introduced by Harish-Chandra \cite{casselman89}. 
In our context, the introduction of the space $\S(G_0)$ was motivated by the study of strongly elliptic operators and the decay properties 
of the semigroups generated by them   \cite{ramacher06}. 
\end{remark}

Consider next  for each $f\in \S(G_0)$  the  linear operator   
\begin{equation}
\label{eq:1703}
\pi(f):\Cinft(X,E) \longrightarrow \Cinft(X,E),\quad  (\pi(f)s)(x)=\int_{G_0} f(g)\, g\cdot [s(g^{-1}\cdot x)] \d_{G}(g)\in E_x.
\end{equation}
It becomes a continuous map when endowing $\Cinft(X,E)$ with the Fr\'{e}chet topology of uniform convergence, 
and its Schwartz kernel is a  distribution section  $\mathcal{K}_f \in  \D'(X \times X, E \boxtimes E')$, 
where $E'=E^\ast\times \Omega_{X}$, and $\Omega_X$ denotes the density bundle on $X$.
Observe that the restriction of $\pi(f) s$ to any of the  $G_0$-orbits depends only on the restriction of $s\in \Cinft(X,E)$ to that  orbit.
Let $X_0$ be an open orbit in $X$. The main goal of this section is to describe the microlocal structure of the operators $\pi(f)$, 
and characterize them as  totally characteristic pseudodifferential operators on the manifold with corners $\overline{X_0}$.
Recall that according to Melrose \cite{melrose} a continuous linear map
\bqn 
Q:\CT(M) \quad \longrightarrow \quad \Cinft(M)
\eqn
on a smooth manifold with corners $M$ is called a \emph{totally characteristic pseudodifferential operator or order $m\in \R$} 
if it can be written in local charts as an oscillatory integral
\begin{equation*}
Q_{loc}u(y):=\int  e^{i \eklm{y, \xi}} q(y,\xi)  \hat u(\xi)  \,\dbar \xi, \qquad u \in \CT(\R^{n,k}), 
\end{equation*} 
where $\hat u$ denotes the Fourier transform of $u$ and $\R^{n,k} =[0,\infty)^k \times \R^{n-k}$  the standard manifold with corners  
with $0 \leq k \leq n$ and coordinates $y=(y_1,\dots, y_k,y')$, while  $\dbar \xi=(2\pi)^{-n} \d \xi$.
The  amplitude $q$ is supposed to be of  the form $q(y,\xi)=\widetilde q(y,y_1\xi_1, \dots, y_k \xi_k, \xi')$, 
where $\widetilde q (y,\xi)$ is a symbol of order $m$ satisfying in addition the \emph{lacunary condition}
\bq
\label{eq:lacunarity}
\int e^{i(1-t)\xi_j} \widetilde q(y, \xi) \d \xi_j =0 \qquad \text{for } t<0 \text{ and } 1\leq j \leq k.
\eq
Conceptually, the algebra of totally characteristic pseudodifferential operators arises from the algebra of totally characteristic differential operators,
which is generated by the vector fields tangential to the boundary $\gd \R^{n,k}$
\bqn 
y_j \frac{\gd}{\gd y_j}, \quad 1 \leq j \leq k, \qquad \frac{\gd}{\gd y_i}, \quad k+1\leq i \leq n,
\eqn
compare also \cite[Section 18.3]{hoermanderIII}. Similarly,  if $E$ and $F$ are vector bundles over $M$, a continuous linear map
\bqn 
Q:\CT(M,E) \quad \longrightarrow \quad \Cinft(M,F)
\eqn
is called a \emph{totally characteristic pseudodifferential operator of order $m\in \R$}, if for every open subset $U\subset M$ and trivializations
\bqn 
\tau_E: E_{|U} \rightarrow U \times \R^{d_E}, \qquad \tau_F: F_{|U} \rightarrow U \times \R^{d_F},
\eqn
there is a $(d_F\times d_E )$-matrix of totally characteristic pseudodifferential operators $Q_{ij}$ of order $m$ such that
\begin{equation*}
  (\tau_F\circ (Qs)_{|U})_i=\sum_{j} Q_{ij} (\tau_E\circ s)_j, \qquad s \in \CT(U; E).
\end{equation*}
In this case, one says that $Q$ is of class $L^m_{b}$.
For a more detailed exposition on totally characteristic pseudodifferential operators the reader is referred to \cite{parthasarathy-ramacher14}.

\subsection{The toric case} 
\label{sec:toric} To make  the essential ideas  behind our approach as clear as possible, we shall first consider the simplest case,
namely  the  toric one, and  restrict ourselves to  the left-regular scalar representation.
Thus, let 
$\mathbf T=(\C^\ast)^r$ be an algebraic torus,  $\mathbf{Z}=\C^r$, and let $\bf T$ act effectively on $\bf Z$ through 
\bq
\label{eq:3toric}
t \cdot z=(\gamma_1(t)z_1, \dots ,\gamma_r(t)z_r) \qquad  z=(z_1, \dots ,z_r) \in \bf Z,  \quad   t \in {\bf T},
\eq
where the $\gamma_i(t)$ are linearly independent characters of $\bf T$,  and as such given in terms of monomials with  real coefficients.
The corresponding action of $T={\bf T}^\sigma$ on the real locus $Z=\R^r$ is given by \eqref{eq:3}.
Next, let $(\nu,\mathrm{C}_0(Z))$ be the continuous  left-regular representation of $T_0$ on the Banach space of continuous functions on $Z$ vanishing at infinity given by 
\bqn 
(\nu(t) \phi)(z) =\phi(t^{-1} \cdot z), \qquad \phi \in \mathrm{C}_0(Z), \quad t \in T_0.
\eqn
We would like to describe for each $f\in \S(T_0)$  the  continuous linear operator   
\begin{equation*}
\nu(f):C_0(Z) \supset \CT(Z) \longrightarrow \Cinft(Z) \subset \D'(Z), \qquad \nu(f)=\int_{T_0} f(t)\,\nu(t) \d_T (t)
\end{equation*}
as a pseudodifferential operator on $Z=\R^r$ using  Fourier analysis,  $d_{T}(t)=({t_1\dots t_r)^{-1}}{\d t}$ being Haar measure on $T$.
For this, let $v \in \CT(Z)$. Applying the inverse Fourier transform one computes
\begin{align*}
\nu(f) v(z) &=\int_{T_0} f(t) v(t^{-1} \cdot z) d_T(t) =\int_{T_0} \left [\int_{\R^r} e^{i \eklm{t^{-1} \cdot z,  \, \xi}} \hat v(\xi) \dbar \xi \right ] f(t) \d_T( t) \\
&= \int_{\R^r} e ^{i \eklm{z, \, \xi}} q_f(z,\xi)\hat v(\xi) \dbar\xi,
\end{align*}
where 
\bqn 
q_f(z,\xi):=e^{-i\eklm{z,\xi}} \int_{T_0} f(t) e^{i\eklm{t^{-1}\cdot z, \, \xi}} \d_T( t)
\eqn 
represents the symbol of $\nu(f)$, and constitutes a  polynomially growing function in $\xi$ for general $z$ due to  the non-transitivity of the $T$-action on $Z$.
Now,   observe that the fundamental vector fields of the $T$-action on $Z$ are given by linear combinations of  the differential operators
 \bqn 
 z_j \frac{\gd }{\gd z_j}, \qquad j=1,\dots, r,
 \eqn
 which correspond to vector fields tangential to the divisor $\mklm{z \in Z: z_1 \cdots z_r=0}$. 
Therefore, it is to be expected that   $\nu(f)$  constitutes a totally characteristic pseudodifferential operator on each of the $2^r$-tants in $\R^r$, 
for which we have to verify that 
\begin{align}
\begin{split}
\label{eq:0203}
\widetilde q_f(\xi)&:=q_f(z,\xi_1/z_1,\dots, \xi_r/z_r )
=e^{-i\eklm{(1,\dots, 1), \,\xi}} \int_{T_0} f(t) e^{i\eklm{(\gamma_1(t^{-1}),\dots, \gamma_r(t^{-1})),\,  \xi}} \d_T (t)
\end{split}
\end{align}
defines a lacunary symbol of order $-\infty$. That is, we have to show that $\widetilde q_f(\xi)$ satisfies the lacunary condition \eqref{eq:lacunarity} and
that  for any $N \in \N$ and arbitrary multi-indices $\alpha$  there exists a constant $ C_{N,\alpha}$ such that
\begin{equation}
\label{eq:0303}
  |(\gd ^\alpha_\xi  \, \widetilde  q_f) (\xi) | \leq \frac 1 {(1+|\xi|^2)^N} C_{N,\alpha} \qquad \xi \in \R^r.
\end{equation}
In this way, we are led to   the following
\begin{definition}
Let 
  \bqn
\F_{\mathrm{spher}}:\S(T_0) \ni f \mapsto  \F_{\mathrm{spher}}(f)(\xi):=\int_{T_0} f(t) e^{i\eklm{(\gamma_1(t^{-1}),\dots, \gamma_r(t^{-1})),\,  \xi}} \d_T( t) \in \Cinft(\R^r).
 \eqn
  \end{definition}
\noindent
Note that $\F_{\mathrm{spher}}$ is invariant under conjugation, that is, for arbitrary $s \in T_0$ we have
\bqn 
\F_{\mathrm{spher}}(\iota_s^\ast f)(\xi) = \int_{T_0} f(t^{-1}) e^{i\eklm{(\gamma_1(s^{-1}t^{-1}s),\dots, \gamma_r(s^{-1}t^{-1}s)),\,  \xi}} \d_T (t)=\F_{\mathrm{spher}}(f)(\xi),
\eqn 
where $\iota_s: t \mapsto sts^{-1}$ denotes  conjugation in $T_0$. Now, in order to prove that the auxiliary symbol $\widetilde q_f(\xi)=e^{-i(\xi_1+\dots +\xi_r)} \F_{\mathrm{spher}}(f)(\xi)$ satisfies \eqref{eq:0303} we will actually show that 
\bq
\label{eq:1803}
 \F_{\mathrm{spher}}:\S(T_0) \longrightarrow \S(\R^r),
 \eq
 where $\S(\R^r)$ denotes the usual Schwartz space on $\R^r$.
In the same way as one verifies that the usual Fourier transform defines a mapping from  $\S(\R^r)$ into itself, 
we shall use partial integration to do so, and exploit the fact that  in the definition of $\F_{\mathrm{spher}}(f)$ only   the transitive action of $T_0$ on $Z^\ast_+:=(\R_\ast^+)^r$ is considered.
We shall only outline here the main steps, which will be carried out in detail within the more general context of Section \ref{sec:parab}.
Thus, setting $\psi_{\xi}(t):=\eklm{(\gamma_1(t),\dots, \gamma_r(t)),\, \xi}$ one computes 
\bq
\label{eq:0603}
\frac{\gd}{\gd t_j} e^{i\psi_{\xi}(t)}=ie^{i \psi_{\xi}(t)} \sum_{i=1}^r  \Gamma_{ji}  \, \xi_i, \qquad \Gamma_{ji}={\frac{\gd \gamma_i}{\gd t_j}(t)}.
\eq
Since the matrix $\Gamma=\mklm{\Gamma_{ji}}$ is non-singular for any $t\in T$ due to the fact that the characters $\gamma_i$ are linearly independent,
one can express any polynomial in $\xi$ by a linear combination of $t$-derivatives of $e^{i\psi_{\xi}(t)}$.  We recall now the following integration formula.

\begin{proposition}
\label{prop:A}
Let $G$ be a real reductive group and $G_0$ the component of the identity. Let $f_1\in\S(G_0)$, and
assume that $f_2 \in \Cinft(G_0)$, together with all its derivatives, is of moderate growth.
Further, let $\mklm{\mathcal{X}_1,\dots, \mathcal{X}_{\dim \g}}$ denote a basis of the Lie algebra of $G$.
Then, for arbitrary multiindices $\gamma$, 
\begin{equation}
\label{A}
\int_{G_0}f_1(g) [dL(\mathcal{X}^\gamma) f_2](g) d_{G_0}(g)=(-1)^{|\gamma|} \int _{G_0}
[dL(\mathcal{X}^{\tilde \gamma}) f_1](g) f_2(g) d_{G_0}(g),
\end{equation}
where we wrote   $\mathcal{X}^\gamma=\mathcal{X}^{\gamma_1}_{i_1}\dots \mathcal{X}^{\gamma_r}_{i_r}$,   
$\mathcal{X}^{\tilde \gamma}=\mathcal{X}^{\gamma_r}_{i_r}\dots \mathcal{X}^{\gamma_1}_{i_1}$.   
\end{proposition}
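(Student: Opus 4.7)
The plan is to reduce to a single left-invariant vector field by induction on $|\gamma|$ and then derive the base case from the invariance of Haar measure. First I would confirm absolute convergence of every integral appearing in the argument. This rests on the fact that $\S(G_0)$ is closed under the operators $dL(H)$ for $H\in\U$ (immediate from the defining seminorms), so that $dL(H)f_1$ still lies in $\L^1(G_0,e^{\kappa|g|}d_{G_0})$ for every $\kappa \ge 0$ (as observed in the remark preceding the statement), while any derivative of $f_2$ remains of moderate growth, say bounded by $C e^{\kappa_0|g|}$. Choosing $\kappa=\kappa_0$, products of the form $[dL(H_1)f_1]\cdot[dL(H_2)f_2]$ are integrable against $d_{G_0}$ for arbitrary $H_1,H_2\in\U$; in particular, every intermediate term produced by the iterated integration by parts below is in $\L^1(G_0)$.

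For the base case $|\gamma|=1$, with a single operator $dL(\mathcal{X})$, the Leibniz rule gives $f_1\cdot dL(\mathcal{X})f_2 = dL(\mathcal{X})(f_1 f_2) - (dL(\mathcal{X})f_1)\cdot f_2$, so the claim reduces to showing that $\int_{G_0} dL(\mathcal{X})(f_1 f_2)\,d_{G_0}(g)=0$. Setting $\phi:=f_1 f_2\in\L^1(G_0)$ and using $dL(\mathcal{X})\phi(g)=\tfrac{d}{dt}\big|_{t=0}\phi(\exp(-t\mathcal{X})g)$, I would interchange the differentiation in $t$ with the integration by dominated convergence (the requisite dominating $\L^1$-function is furnished by $dL(\mathcal{X})\phi$ itself together with the convergence estimate of the previous paragraph) and then invoke left-invariance of Haar measure to see that $\int_{G_0}\phi(\exp(-t\mathcal{X})g)\,d_{G_0}(g)=\int_{G_0}\phi\,d_{G_0}$ is constant in $t$, hence has vanishing derivative at $t=0$.

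For general multi-indices I would iterate. Writing the composition as $dL(\mathcal{X}^\gamma)=A_1 A_2\cdots A_n$ with $n=|\gamma|$ and each $A_j$ equal to one of the $dL(\mathcal{X}_{i_k})$ in the natural order, each application of the $|\gamma|=1$ identity peels off the leftmost remaining factor acting on $f_2$ and transfers it onto $f_1$ at the cost of a sign. Specifically, the $k$-th step transforms the integrand $(A_{k-1}\cdots A_1 f_1)(A_k A_{k+1}\cdots A_n f_2)$ into $-(A_k A_{k-1}\cdots A_1 f_1)(A_{k+1}\cdots A_n f_2)$, which is legitimate because $A_{k-1}\cdots A_1 f_1\in\S(G_0)$ and the remaining derivatives of $f_2$ are still of moderate growth. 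After $n$ steps the operators act on $f_1$ in the reversed order $A_n A_{n-1}\cdots A_1=dL(\mathcal{X}^{\tilde\gamma})$, and the accumulated sign is $(-1)^n=(-1)^{|\gamma|}$, exactly the asserted formula. The main obstacle is really just the differentiation-under-the-integral step in the base case; everything else is bookkeeping controlled by the mapping property $dL(H)\S(G_0)\subset\S(G_0)$ and the moderate-growth hypothesis on $f_2$ and its derivatives.
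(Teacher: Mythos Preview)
Your argument is correct and is essentially the standard proof; the paper itself does not give a proof but simply cites Knapp, Lemma~10.22 and Ramacher, Proposition~1, whose arguments proceed exactly along the lines you sketch (Leibniz rule plus left-invariance of Haar measure for a single vector field, then iteration with the order reversal).

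One small remark on the base case: the dominated-convergence justification you indicate can be made cleaner by avoiding the limit altogether. Writing $\psi:=dL(\mathcal{X})\phi\in\L^1(G_0)$ and using the fundamental theorem of calculus,
\[
\int_{G_0}\frac{\phi(\exp(-t\mathcal{X})g)-\phi(g)}{t}\,d_{G_0}(g)
=\frac{1}{t}\int_0^t\!\!\int_{G_0}\psi(\exp(-s\mathcal{X})g)\,d_{G_0}(g)\,ds
=\int_{G_0}\psi\,d_{G_0},
\]
by Fubini and left-invariance; since the left-hand side vanishes identically in $t\ne 0$ (again by left-invariance and $\phi\in\L^1$), so does $\int_{G_0}\psi\,d_{G_0}$. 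This sidesteps the need to exhibit an explicit dominating function for the difference quotients.
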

\begin{proof}
See \cite[Lemma 10.22]{knapp} or  \cite[Proposition 1]{ramacher06}.
\end{proof}
By taking into account that $f$ is rapidly decreasing   and integrating $\F_{\mathrm{spher}}(f)$ according to the previous proposition we obtain \eqref{eq:1803}.
Since \eqref{eq:lacunarity} is a direct consequence\footnote{See end of Section \ref{sec:parab} for details.}  of the orbit structure of the $T$-action on $Z$, 
it follows that $\nu(f)$ is a totally characteristic pseudodifferential operator of order $-\infty$ on each of the $2^r$-tants of $Z=\R^r$.

\subsection{The parabolic case} 
\label{sec:parab}
Next, let  $P$ be the standard parabolic  subgroup of $ G$ such that $G_0/P_0$ is isomorphic to the unique closed $G_0$-orbit, and let   
${ P} ={ P}^u { L}$ be  its  Levi decomposition with $ T\subset L$. Writing $L= S \cdot (L,L)$, the Langlands decomposition of $P$ reads
\bqn 
P=MAN, \qquad  \, M:=(L,L),\quad A:=S, \quad N:=P^u\simeq \R^s
\eqn
in  standard terminology. Note that $T=A T'$, where $T'$ is the maximal torus of $M$ contained in $T$, 
and $\dim A=r$.\footnote{Notice that in Section \ref{sec:toric} we assumed  ${\bf T}$ to act effectively on $\bf Z$, so that we have  $T=A$ there.} Recall that  $P$ acts on $P^u\cdot Z$ as in \eqref{eq:actions};
in particular,
$Z$ is acted upon trivially by the commutator of $L$, while $A$ acts as in the toric case.

We shall again consider the scalar-valued case, and restrict ourselves to the description of the continuous linear operators 
\begin{equation*}
\nu(f): \CT(P^u \cdot Z) \longrightarrow \Cinft(P^u \cdot Z),  \quad \nu(f)=\int_{P_0} f(p)\,\nu(p)d_{P}(p), \quad f \in \S(P_0),
\end{equation*}
in the canonical chart $P^u \cdot Z\subset X$, 
where $ (\nu,C_0(P^u \cdot Z))$ denotes the left-regular representation of $P$ on the Banach space $C_0(P^u \cdot Z)$ of continuous functions on $P^u \cdot Z$ vanishing at infinity. In view of the local structure theorem we  identify $P^u\cdot Z$ with $ P^u \times Z \simeq \R^{s+r}$ in this subsection.
With respect to these isomorphisms  the action of $P=MAN$ on $P^u\cdot  Z$ is given by 
\begin{gather}
\begin{split}
\label{eq:paract} 
\phi^{-1}:(man)^{-1} p_u\cdot z=n^{-1} (ma)^{-1} p_u ma \,   a^{-1}  \cdot z \longmapsto (n^{-1} (ma)^{-1} p_u ma, a^{-1} \cdot z) \\ 
\longmapsto (p_1(n^{-1} (ma)^{-1} p_u ma),\dots , \gamma_1(a^{-1}) z_1,\dots, \gamma_r(a^{-1}) z_r ), 
\end{split}
\end{gather}
compare \eqref{eq:3}-\eqref{eq:coord}. Now, for a function $v \in \CT(P^u \cdot Z)$ one has
\begin{align*}
\nu(f) v(p_u\cdot z) &=\int_{P_0} f(p) v(p^{-1} p_u\cdot z) \d p=\int_{P_0} \left [\int_{\R^{s+r}} e^{i \eklm{\phi^{-1}(p^{-1} p_u \cdot z),   \, \xi}} \hat v(\xi) 
\, \dbar \xi \right ]    f(p) \d p,
\end{align*}
where $\hat v(\xi)$ denotes the Fourier transform of $v$ as a function on $P^u \cdot Z \simeq P^u \times Z \simeq \R^{s+r}$.
Introducing the phase function
\begin{align}
\begin{split}
\label{eq:phase}
\psi_{p_u\cdot z,\xi}(m,a,n):&=\eklm{\phi^{-1}(nam \, p_u \cdot z), \xi}\\ &=\eklm{\big (p_1(n (am) p_u (am)^{-1}), \dots, \gamma _1(a) z_1, \dots ,\gamma_r(a)z_r \big),  \, \xi}
\end{split}
\end{align}
and using the integration formulas for real reductive groups  we obtain for $P_0=M_0A_0N$
\begin{gather*}
\nu(f) v(p_u\cdot z)\\=\int_{M_0 \times A_0\times N} \left [\int_{\R^{s+r}} e^{i \psi_{p_u\cdot z,\xi}(m^{-1},a^{-1},n^{-1})} \hat v(\xi) \, 
\dbar \xi \right ]  f(man) a^{-2\rho} \d m \d a \d n\\ 
= \int_{\R^{s+r}} e ^{i \eklm{\phi^{-1}(p_u \cdot z), \, \xi}} q_f(p_u \cdot z,\xi)\hat v(\xi) \dbar\xi,
\end{gather*}
where we set
\begin{gather*} 
q_f(p_u \cdot z,\xi) :=e^{-i\eklm{\phi^{-1}(p_u\cdot z),\xi}} \int_{M_0 \times A_0 \times N} f(man) e^{i\psi_{p_u\cdot z,\xi}(m^{-1},a^{-1},n^{-1})} a^{-2\rho}  \d m \d a \d n.
\end{gather*}
Here $\rho \in \a^\ast$ is given by $\rho(\mathcal{A})=\frac 12 \tr (\ad \mathcal{A}_{|\n})$ and $a^{-2\rho}= \exp(-2 \rho(\mathcal{A}))$ 
if $a=\exp \mathcal{A}\in A_0$, $\mathcal{A} \in \a$, compare \cite[Section 2.4]{wallach}.
Next, consider the $z$-independent auxiliary symbol
\begin{align*}
\begin{split}
\label{eq:parsymb}
\widetilde q_f(p_u,\xi):&=q_f(p_u \cdot z,(\xi_1,\dots,\xi_s,\xi_{s+1}/z_1,\dots \xi_{s+r}/z_r))\\ &=e^{-i\eklm{(p_1,\dots, p_s,1,\dots,  1),\, \xi}}
 \int_{M_0 \times A_0 \times N} f(man) e^{i\psi_{p_u,\xi}(m^{-1},a^{-1},n^{-1}) } a^{-2\rho}  \d m \d a \d n, 
\end{split}
\end{align*}
where  
\bqn
\psi_{p_u,\xi}(m,a,n):=\psi_{p_u\cdot (1,\dots,1),\xi}(m,a,n).
\eqn
 We now arrive at the following

\begin{definition} 
\label{def:spherfourier}
Let $\psi_{p_u \cdot z,\xi}$ be defined by \eqref{eq:phase}. We then define the mapping\footnote{From our  point of view, it would be natural to call $\F_{\mathrm{spher}}$  the \emph{spherical Fourier transform of $f$}. But since this will probably lead to confusion with other transforms in literature that are called similarly, like the  \emph{spherical transform}, which is defined for  K-bi-invariant functions on a locally compact Lie group, or  the \emph{Helgason-Fourier transform}, which is defined for right $K$-invariant functions on a connected non-compact semisimple Lie group with finite center, $K$ being a maximal compact subgroup, we desisted from doing so.}
\begin{gather*} 
\F_{\mathrm{spher}}: \S(P_0) \longrightarrow \Cinft(P^u\cdot Z \times \R^{s+r}),  \\
f \longmapsto \F_{\mathrm{spher}}(f)(p_u\cdot z, \xi)=\int_{M_0 \times A_0 \times N} f(man) e^{i\psi_{p_u\cdot z,\xi}(m^{-1},a^{-1},n^{-1}) } a^{-2\rho}  \d m \d a \d n. 
\end{gather*} 
\end{definition}

Notice that $\F_{\mathrm{spher}}$ is given in terms of the spherical roots of $\bf X$ which, together with the standard parabolic subgroup $\mathbf{P}\subset \mathbf{G}$, 
are the combinatorial objects that characterize $\bf X$. Furthermore, 
$$Z^\ast \simeq \mklm{a\in A: \gamma_i(a)\neq 0 \quad  \forall \,1\leq i\leq r}.
$$
Of course, $\F_{\mathrm{spher}}$ can be written simply as an integral over $P_0$, but using the Langlands decomposition of $P_0$    the spherical roots in $\F_{\mathrm{spher}}$ become manifest. Next, we have the following crucial
\begin{proposition} 
\label{prop:spherfourier}
The transform $\F_{\mathrm{spher}}$ defines a linear map from the Casselman-Wallach space $\S(P_0)$ to the space of symbols $S^{-\infty}(P^u\cdot Z^\ast \times \R^{s+r})$,
\bqn
\F_{\mathrm{spher}}:\S(P_0) \longrightarrow S^{-\infty}(P^u \cdot Z^\ast \times \R^{s+r}),
\eqn
where $Z^\ast :=\mklm{z \in Z: z_1 \dots z_r\not=0}$.
\end{proposition}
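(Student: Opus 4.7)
I would generalize the integration-by-parts computation of the toric case (Section \ref{sec:toric}) to the Langlands decomposition $P_0 = M_0 A_0 N$. First, rewrite
\[
\F_{\mathrm{spher}}(f)(x,\xi)=\int_{P_0} f(p)\,a^{-2\rho}\,F(p)\,d_{P}(p),\qquad F(p):=e^{i\langle \phi^{-1}(p^{-1}\cdot x),\xi\rangle},
\]
for $x=p_u\cdot z$. Pick a basis $\{Y_1,\dots,Y_s\}$ of $\n$ together with $\{Y_{s+1},\dots,Y_{s+r}\}$ of $\a$, and denote by $\tilde Y_k$ the corresponding left-invariant vector field on $P_0$. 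A direct calculation from \eqref{eq:paract} gives
\[
\tilde Y_k F(p) = i\,A_k(p,x,\xi)\,F(p),\qquad A_k(p,x,\xi):=-\langle V_k(p^{-1}\cdot x),\xi\rangle,
\]
where $V_k(y):=d\phi^{-1}(Y_k^{\#}(y))\in\R^{s+r}$ and $Y_k^{\#}$ is the fundamental vector field of $Y_k$ for the $P$-action on $U_e$; in particular $A_k$ is linear in $\xi$ and real-analytic in the remaining variables.

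The decisive geometric input is that the matrix $V(y)=[V_1(y)\,|\,\cdots\,|\,V_{s+r}(y)]$ is invertible for every $y=p_u\cdot z\in P^u\cdot Z^\ast$. With respect to the splitting $\R^{s+r}=\R^s\oplus\R^r$ induced by the isomorphism $P^u\times Z\simeq P^u\cdot Z$, it is block upper-triangular: the $s\times s$ block coming from $\n$ is invertible because $\n$ acts on $P^u$ by left-translation, and the $r\times r$ block coming from $\a$ equals $[-d\gamma_i(Y_k)z_i]_{i,k}$, which is invertible because the spherical roots $\gamma_1,\dots,\gamma_r$ are linearly independent (Theorem \ref{thm:ACF}) and $z_i\neq 0$ on $Z^\ast$. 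Since $P^u\cdot Z^\ast$ is $P$-invariant under \eqref{eq:actions}, $p^{-1}\cdot x\in P^u\cdot Z^\ast$ for every $p\in P_0$, and hence the first-order operator
\[
\mathcal L_\xi:=\sum_{k=1}^{s+r}\frac{-i\,A_k(p,x,\xi)}{\sum_j A_j(p,x,\xi)^2}\,\tilde Y_k
\]
satisfies $\mathcal L_\xi F = F$ whenever $\xi\neq 0$. Iterating $\mathcal L_\xi^N F = F$ and transferring the vector fields onto $f\cdot a^{-2\rho}$ via Proposition \ref{prop:A} (after expanding the products $c_{k_1}\tilde Y_{k_1}\cdots c_{k_N}\tilde Y_{k_N}$ by Leibniz) expresses $\F_{\mathrm{spher}}(f)(x,\xi)$ as a sum of integrals whose integrands combine left-invariant derivatives of $f$ with functions that are rational in $\xi$ and homogeneous of degree $-N$. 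This yields the bound $|\F_{\mathrm{spher}}(f)(x,\xi)|\leq C_N(1+|\xi|)^{-N}$ uniformly on compact subsets of $P^u\cdot Z^\ast$, and analogous estimates for $\partial_\xi^\alpha\partial_x^\beta \F_{\mathrm{spher}}(f)$ by first differentiating under the integral sign and then running the same argument with larger $N$.

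The principal difficulty I foresee is controlling the $p$-dependence of the coefficients of $\mathcal L_\xi^N$, which are products of the $A_k$, their iterated $\tilde Y$-derivatives, and reciprocals of $\sum_j A_j^2$, all evaluated at $p^{-1}\cdot x$. As $p$ ranges over the non-compact group $P_0$, the coordinates of $V_k(p^{-1}\cdot x)$ involve characters $\gamma_i(a)^{\pm 1}$ together with polynomial expressions in the $N$-coordinates of $p$ and may grow exponentially in $|p|$; to obtain a convergent integral one must balance this growth against the Schwartz decay of $f\in\S(P_0)$ and the modular factor $a^{-2\rho}$. The observation recorded in the remark following the definition of $\S(P_0)$---that every left-invariant derivative of $f$ remains integrable against any weight $e^{\kappa|g|}$---is the essential tool here, and the explicit coordinate formula \eqref{eq:paract} is what ultimately makes the inductive bookkeeping tractable.
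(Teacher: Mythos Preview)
Your proposal is correct and follows essentially the same strategy as the paper: both hinge on the invertibility of the $(s+r)\times(s+r)$ matrix coming from the $\n\oplus\a$-action on $P^u\cdot Z^\ast$ (the paper's Lemma~\ref{lem:16.03.2015}, your matrix $V(y)$), then integrate by parts via Proposition~\ref{prop:A}, absorbing the moderate growth of the resulting coefficients into the Casselman--Wallach decay of $f$. The only technical difference is the choice of parametrix: the paper inverts $\Gamma$ in the symmetric algebra to obtain $e^{i\psi}=(1+|\xi|^2)^{-\widetilde N}\sum_\alpha b_\alpha\, dL(\mathcal X^\alpha)e^{i\psi}$ with $\xi$-\emph{independent} coefficients $b_\alpha$ of moderate growth, whereas you iterate the H\"ormander-type operator $\mathcal L_\xi=\sum_k(A_k/|A|^2)\tilde Y_k$ with $\xi$-homogeneous rational coefficients; the growth bookkeeping is the same in either formulation, since in both cases the relevant bounds are governed by the entries of $\Gamma^{-1}$ (equivalently $V^{-1}$), which are of moderate growth in the group variable for fixed $p_u\cdot z\in P^u\cdot Z^\ast$.
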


As in Section \ref{sec:toric}, we would like to use the integration formula of Proposition \ref{prop:A} to prove Proposition \ref{prop:spherfourier}.
But now we have to consider also  the action of $AN$ on $P^u$  besides the action of $A$ on $Z$.  Indeed, in analogy to \eqref{eq:0603} one proves 

\begin{lemma}
\label{lem:16.03.2015}
Let  $\{\mathcal{N}_1,\dots,\mathcal{N}_s\}$ and  $\{\mathcal{A}_1,\dots,\mathcal{A}_r\}$ be bases for the Lie algebras $\n$ and $\a$ of $N$ and $A$, respectively.
Further, assume that $p_u \cdot z \in P^u \cdot Z^\ast$. Then 
\bq
\label{eq:0703}
\begin{pmatrix} dL(\mathcal{N}_1)e^{i\psi_{p_u\cdot z,\xi}}\\ \vdots\\ dL(\mathcal{A}_r)e^{i\psi_{p_u\cdot ,\xi}} 
\end{pmatrix} 
(m,a,n) =ie^{i\psi_{p_u\cdot z,\xi}(m,a,n)} \, \Gamma(p_u,z,m,a,n)\cdot \xi,
\eq
where
\bqn
\Gamma(p_u,z,m,a,n)=
\left(\begin{array}{cc}
\Gamma_1 & \Gamma_2 \\
 \Gamma_3   &\Gamma_4\\
 \end{array}\right) 
=
\left(\begin{array}{cccc}
 dL(\mathcal{N}_i)p_{j,p_u}(m,a,n) & \multicolumn{1}{c|}{}  & & 0 \\
&  \multicolumn{1}{c|}{} &  &\\
\cline{1-4} &   \multicolumn{1}{c|}{}  & &\\
 dL(\mathcal{A}_i)p_{j,p_u}(m,a,n) & \multicolumn{1}{c|}{} &  &dL(\mathcal{A}_i)\gamma_j(a)z_j\\
 \end{array}\right)
\eqn
 belongs to $\GL(s+r,\R)$, and we wrote $p_{j,p_u}(m,a,n)=p_j(n (am) p_u (am)^{-1})$.
\end{lemma}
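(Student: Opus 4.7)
The plan is to derive \eqref{eq:0703} from the chain rule applied to the decomposition
\[
\psi_{p_u\cdot z,\xi}(m,a,n)=\sum_{j=1}^s p_{j,p_u}(m,a,n)\,\xi_j+\sum_{j=1}^r \gamma_j(a)z_j\,\xi_{s+j},
\]
and then to analyse the block structure of $\Gamma$ to establish its non-singularity. Since $dL(X)e^{i\psi}=ie^{i\psi}\,dL(X)\psi$ for any $X$ in the Lie algebra, it is enough to evaluate $dL(\mathcal{N}_i)$ and $dL(\mathcal{A}_i)$ on each of the two summands above and to read off the $i$-th row of $\Gamma$ from the coefficients of $\xi$.

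First I would verify that $\Gamma_2=0$. Under the diffeomorphism $M_0\times A_0\times N\to P_0$, $(m,a,n)\mapsto man$, the vector field $dL(\mathcal{N}_i)$ transports to a pure left-invariant derivative in the $n$-variable, because translating by $\exp(t\mathcal{N}_i)$ only affects the $N$-component of the $MAN$-decomposition ($N$ being normal in $P_0$). Since the functions $\gamma_j(a)z_j$ depend only on $(a,z)$ and not on $n$, they are annihilated by $dL(\mathcal{N}_i)$; hence $\Gamma_2=0$ and $\Gamma$ is block lower-triangular.

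Next, for $\Gamma_4$: because $A_0$ is abelian and $\gamma_j$ is a character of $A$, one computes $dL(\mathcal{A}_i)[\gamma_j(a)z_j]=\gamma_j(a)z_j\,d\gamma_j(\mathcal{A}_i)$, so $\Gamma_4=(d\gamma_j(\mathcal{A}_i))_{i,j}\cdot\mathrm{diag}(\gamma_1(a)z_1,\dots,\gamma_r(a)z_r)$. One has $z_j\neq 0$ since $z\in Z^\ast$, and $\gamma_j(a)\neq 0$ because $\gamma_j$ is a character of the connected group $A_0$. Furthermore, Theorem \ref{thm:ACF}(3) ensures that $T'=T\cap M=(L,L)\cap T$ acts trivially on $Z$, so the $\gamma_j$ factor through $T/T'$, and their linear independence as characters of $T$ descends to their restrictions to $A$. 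Since $X^\ast(A_0)$ embeds into $\a^\ast$ via the differential and $\dim\a^\ast=r$, the functionals $d\gamma_1,\dots,d\gamma_r$ form a basis of $\a^\ast$, so $(d\gamma_j(\mathcal{A}_i))\in\GL(r,\R)$ and $\Gamma_4$ is invertible.

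The main obstacle is the invertibility of $\Gamma_1=(dL(\mathcal{N}_i)p_{j,p_u}(m,a,n))_{i,j}$. I would interpret $\Gamma_1$ as the matrix, in the left-invariant frame $\{dL(\mathcal{N}_i)\}$ on $N$ and the global coordinates $(p_1,\dots,p_s)$, of the differential of the right-translation $\Phi_q:N\to N$, $n\mapsto n\cdot q$, for the fixed element $q:=(am)p_u(am)^{-1}\in P^u$. Since $\Phi_q$ is a global diffeomorphism of $N\simeq\R^s$ and $\{dL(\mathcal{N}_i)|_n\}$ is a basis of $T_nN$ at every point, the matrix $\Gamma_1$ is invertible. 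Combined with $\Gamma_2=0$ and $\Gamma_4\in\GL(r,\R)$, the block-triangular determinant formula yields $\det\Gamma=\det\Gamma_1\cdot\det\Gamma_4\neq 0$, so $\Gamma\in\GL(s+r,\R)$, completing the proof.
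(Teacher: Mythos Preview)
Your proposal is correct and follows essentially the same approach as the paper: both derive \eqref{eq:0703} by the chain rule applied to the two summands of $\psi_{p_u\cdot z,\xi}$, observe that $\Gamma_2=0$, and then argue invertibility of the block-triangular matrix via $\Gamma_4$ (linear independence of the spherical roots, $z\in Z^\ast$) and $\Gamma_1$ (the $N$-action on $P^u$ is simply transitive). Your write-up is more detailed than the paper's---in particular, your remark that the $\gamma_j$ descend to linearly independent characters of $A$ because $T'\subset(L,L)$ acts trivially on $Z$ is a useful clarification the paper leaves implicit.

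One small imprecision worth tightening: when you say $dL(\mathcal{N}_i)$ ``transports to a pure left-invariant derivative in the $n$-variable,'' the actual computation gives
\[
dL(\mathcal{N}_i)F(man)=\frac{d}{d\epsilon}\Big|_{\epsilon=0}F\bigl(m,a,\exp(-\epsilon\,\mathrm{Ad}((ma)^{-1})\mathcal{N}_i)\,n\bigr),
\]
so the frame on $N$ is the $\mathrm{Ad}((ma)^{-1})$-twist of the one you describe. Since $\mathrm{Ad}((ma)^{-1})$ is an automorphism of $\n$, this is still a basis of $T_nN$, and your invertibility conclusion for $\Gamma_1$ is unaffected.
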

\begin{proof} 
For $\A\in\a$ one computes 
\begin{align*}
dL(\A){\psi_{p_u\cdot z,\xi}}(m,a,n)&=\frac{d}{d\epsilon}\psi_{p_u \cdot z,\xi}(m,  \e{-\epsilon\A} a, n)_{|\epsilon=0}\\ 
&=\sum_{{j}=1}^{s}\xi_{j}dL(\A)p_{j,x}(m,a,n)+\sum_{{j}=s+1}^{s+r}\xi_{j}dL(\A)\gamma_j(a)z_j,
\end{align*}
and similarly for $\Ncal \in \n$, showing \eqref{eq:0703}. In particular, $\Gamma_2$ is identically zero.
To see the invertibility of the matrix $\Gamma=\Gamma(p_u,z,m,a,n)$ note that, 
like the matrix  $\mklm{\Gamma_{ij}}$ in \eqref{eq:0603}, the matrix $\Gamma_4$ is non-singular because of the linear independence of the spherical roots $\gamma_j$,
provided that $z \in Z^\ast$. Further, due  to the transitivity of the $N$-action on $P^u$, the matrix  $\Gamma_1$ is non-singular, too.  
Thus, we conclude that $\Gamma$ is non-singular.
\end{proof}

\begin{proof}[Proof of Proposition \ref{prop:spherfourier}]
Let $p_u \cdot z \in P^u \cdot Z^\ast$. 
As a  consequence of the previous lemma one can express any polynomial in $\xi$ as a linear combination of derivatives in $\n$ and $\a$ of $e^{i \psi_{p_u\cdot z,\xi}}$.
More precisely, consider the extension of $\Gamma=\Gamma(p_u,z,m,a,n)$, regarded as an endomorphism in $\C^1[\R^{s+r}_\xi]$, 
to the symmetric algebra ${\rm{S}}(\C^1[\R^{s+r}_\xi])\simeq \C[\R^{s+r}_\xi]$.  
By Lemma \ref{lem:16.03.2015} the matrix $\Gamma$ is invertible,  so its extension to $ {\rm{S}}^N(\C^1[\R^{s+r}_\xi])$ is an automorphism, too.
We regard  the polynomials $\xi_1,\dots,\xi_{s+r}$ as a basis in $\C^1[\R^{s+r}_\xi]$,  and
 denote the image of the basis vector $\xi_i$ under the endomorphism $\Gamma$ by $\Gamma \xi_i$. 
Every monomial $\xi_{i_1} \otimes \dots \otimes \xi_{i_N}\equiv \xi_{i_1} \dots \xi_{i_N}$ can then be written as a linear combination
 \begin{equation*}
   \xi^\alpha =\sum _\beta \Lambda^\alpha_\beta (p_u,z,m,a,n) \Gamma \xi_{\beta_1} \cdots \Gamma \xi_{\beta_{|\alpha|}},
 \end{equation*}
where the $\Lambda^\alpha_\beta(p_u,z,m,a,n)$ are $\Cinft$ functions on $P^u\times Z\times M\times A\times N$ which are of moderate growth in $m,a,n$. 
Taking \eqref{eq:0703} into account, a simple computation yields for arbitrary indices $\beta_1,\dots, \beta_j$ and elements $\mathcal{X}_i\in \a \oplus \n$
  \begin{align*}
\begin{split}
    i^je^{i\psi_{p_u\cdot z,\xi}(m,a,n)} & \Gamma\xi_{\beta_1} \cdots \Gamma\xi_{\beta_j}= 
dL(\mathcal{X}_{\beta_1} \cdots \mathcal{X}_{\beta_j})e^{i\psi_{p_u\cdot z,\xi}(m,a,n)}\\&+ 
\sum_{j'=1}^{j-1} \sum _{\alpha_1,\dots, \alpha_{j'}} d ^{\beta_1,\dots, \beta_j}_{\alpha_1,\dots, \alpha_{j'}} (p_u,z,m,a,n) d L(\mathcal{X}_{\alpha_1} \cdots 
\mathcal{X}_{\alpha_{j'}}) e^{i\psi_{p_u\cdot z,\xi}(m,a,n)},
\end{split}  
\end{align*}
where the coefficients $ d ^{\beta_1,\dots, \beta_j}_{\alpha_1,\dots, \alpha_{j'}}$ are smooth and of moderate growth in $m$, $a$, $n$, 
as well as independent of $\xi$.
Thus, for arbitrary $\widetilde N \in \N$ one obtains
\begin{equation}
  \label{eq:26}
e^{i\psi_{p_u\cdot z ,\xi}(m,a,n)}=  (1+|\xi|^2)^{-\widetilde N} \sum_{j=0}^{2\widetilde N} \sum_{|\alpha | =j} 
b^N_\alpha(p_u,z,m,a,n) d L(\mathcal{X}^\alpha) e^{i\psi_{p_u\cdot z,\xi}(m,a,n)},
\end{equation}
with  $\mathcal{X}^\alpha\in \mathfrak{U}(\a\oplus \n)$ and coefficients $b_\alpha^N(p_u,z, m,a,n) $  that are of moderate growth in $m, a, n$.
Similarly,  $a^{-2\rho}$ is of moderate growth.
Since $f$ is  rapidly decreasing, integrating $\F_{\mathrm{spher}}(f)(p_u\cdot z, \xi)$ by parts according to Proposition \ref{prop:A} with respect to $N\times A$ yields 
for any $\widetilde N \in \N$, any compact subset  $\mathcal{K}\subset P^u\cdot Z^\ast$, and arbitrary multi-indices $\alpha$ and
 $\beta$ the existence of  a constant $ C_{\alpha,\beta,\mathcal{K}}$ such that
\begin{equation*}
  |(\gd ^\alpha_\xi \gd ^\beta _{p_u, z} \, \F_{\mathrm{spher}}(f)) (p_u\cdot z ,\xi) | \leq \frac 1 {(1+|\xi|^2)^{\widetilde N}} C_{\alpha,\beta,\mathcal{K}} \qquad p_u\cdot z \in \mathcal{K}, \, \xi \in \R^{s+r},
\end{equation*}
thus proving Proposition  \ref{prop:spherfourier}. 
\end{proof}

\begin{remark}
The proof of Proposition \ref{prop:spherfourier} is modelled on the proof of  \cite[Theorem 4]{ramacher06} and \cite[Theorem 2]{parthasarathy-ramacher14}, where the integral transform  $\F_{\mathrm{spher}}$ was not explicitly introduced yet, but is tacitly present.
\end{remark}

From Proposition \ref{prop:spherfourier} we now infer that  
$$\widetilde q_f(p_u,\xi)=e^{-i\eklm{(p_1,\dots, p_s,1,\dots,  1),\, \xi}} \F_{\mathrm{spher}}(f)(p_u \cdot (1,\dots,1), \xi) 
$$
 defines a  $z$-independent symbol in $S^{-\infty}(P_u\cdot Z, \R^{s+r})$.
It remains to verify that it satisfies the lacunary condition \eqref{eq:lacunarity} which, as we will see, 
is a direct consequence of the orbit structure of the $P$-action on $P^u\cdot Z$.
Indeed, by the previous proposition one clearly has  
\bqn
 q_f(p_u\cdot z,\xi)=e^{-i\eklm{\phi^{-1}(p_u \cdot z),\, \xi}} \F_{\mathrm{spher}}(f)(p_u \cdot z, \xi)  \in \Syms(P^u \cdot Z^\ast \times \R^{s+r}_\xi),
 \eqn 
so that $\nu(f)_{|P^u \cdot Z^\ast}$ represents a pseudodifferential operator of class $ \L^{-\infty}$ on $P^u \cdot Z^\ast$.
Furthermore, the Schwartz kernel of $\nu(f)_{|P^u \cdot Z^\ast}$ is given by the oscillatory integral
\begin{equation*}
  \int e^{i\eklm{\phi^{-1}(p_u\cdot z)-\phi^{-1}(p_u'\cdot z'),\cdot \xi}} q_f(p_u\cdot z,\xi) \dbar \xi, \qquad z \in Z^\ast.
\end{equation*}
Now, due to the nature of the $P$-action on $P^u\cdot Z$, the restriction of $\nu(f) v$  to one of the $2^r$-tants $P^u\cdot { Z_{\pm \dots \pm}}$, 
where $Z_{\pm\dots \pm}:=\mklm{z \in Z: z_1\gtreqless 0, \dots, z_r\gtreqless 0}$, only depends on  the restriction of $v$  to  the selfsame $2^r$-tant, 
so that necessarily
\begin{equation*}
  \supp K_{\nu(f)} \subset (P^u\cdot {Z_{+ \dots +}}\times P^u\cdot { Z_{+\dots +}}) \cup \dots \cup (P^u\cdot {Z_{- \dots -}}\times P^u\cdot {Z_{-\dots -}}),
\end{equation*}
where $K_{\nu(f)}\in \D'(P^u \cdot Z \times P^u \cdot Z)$ denotes the Schwartz kernel of $\nu(f)$ 
as a continuous linear operator from $\CT(P^u \cdot Z)$  to $\Cinft(P^u \cdot Z)$.
Consequently, the integrals
\begin{equation*}
  \int e ^{i(z_j-z_j') \xi_{s+j}}  \widetilde q_f(p_u,(\xi_1,\dots,\xi_s, z_1 \xi_{s+1}, \dots ,z_r \xi_{s+r})) \, \dbar \xi_{s+j}, \qquad 1\leq j \leq r,
\end{equation*}
which are $\Cinft$-functions on $P^u \cdot Z^\ast\times P^u \cdot Z^\ast$, must vanish if $z_j$ and $z_j'$ do have different signs.
For $z,z' \in Z^\ast$ we can perform the substitutions $z_j\xi_{s+j} \mapsto \xi_j$ and write $t=z_j'/z_j-1$, thus arriving at the conditions
\begin{equation*}
\label{27a}
\int e^{-it\xi_j} \widetilde q_f(p_u,\xi) \, \dbar \xi_j =0 \qquad \text{ for } 1\leq j \leq r, \quad  t< -1, \quad p_u  \in P^u.
\end{equation*}
But these conditions no longer depend on $z$, meaning that  $\widetilde q_f(p_u,\xi)$ satisfies the lacunary condition \eqref{eq:lacunarity} on the whole chart $P^u \cdot Z$. Actually, this condition precisely encodes its orbit structure.
Thus, we have shown that $\nu(f)$ is a totally characteristic pseudodifferential operator of order $-\infty$ on the canonical chart $P^u \cdot Z$.

\subsection{The general case} After these considerations, we are ready to deal with the general case.
Thus, let  ${\bf G}$ be a connected reductive complex algebraic group and $\bf X$  a strict wonderful $\bf G$-variety of rank $r$, and consider the operators \eqref{eq:1703}.
Choose for each $x \in X$  open neighbourhoods $\Ucal_x \subset \Ucal '_x$ of $x$  contained in $U_{g}$ for some $g \in G_0$ depending on $x$.
Since  $X$ is compact, we can take a finite sub-covering of the open covering $\{\Ucal_x\}_{x \in X}$ to obtain a  finite atlas  
$\mklm{( \Ucal_{\rho}, \phi_{\rho}^{-1})}_{\rho \in R}$  on $X$, where $\phi_\rho=\phi_{g_\rho }$ for a suitable $g_\rho  \in G_0$.
In addition, assume that the subsets $\Ucal_\rho'$ have been chosen such that one has the trivializations
\bqn 
\tau_E^\rho: E_{|\Ucal_\rho'} \longrightarrow \Ucal_\rho' \times \R^d.
\eqn
One then computes  for $s \in \CT(\Ucal_\rho, E)$, $(\tau_E^\rho \circ s)(x)=(x,e_\rho(x))$,
\begin{align*}
(\tau_E^\rho \circ (\pi(f)s)_{|\Ucal_\rho})(x)&= \int_{G_0} f(h) \tau_E^\rho( h \cdot s(h^{-1} \cdot x)) \d_G(h)\\&=\Big (x, \int_{G_0} f(h) \, 
\mathcal{M}_\rho(h,h^{-1}\cdot x) [ e_\rho(h^{-1} \cdot x)] \d_G(h)\Big ),
\end{align*}
where $\mathcal{M}_\rho(h,  x)$ denotes the linear map on $\R^d$ induced by
\bqn
\R^d \simeq \{h^{-1}\cdot x\}\times \R^d \quad \overset{(\tau_E^\rho)^{-1}}{\longrightarrow}\quad E_{h^{-1}\cdot x} \quad  
\stackrel{h}{\longrightarrow} \quad E_x\quad \stackrel{\tau_E^\rho}{\longrightarrow} \quad \{x\} \times \R^d \simeq \R^d.
\eqn
We are therefore left with the task of examining the $(d\times d)$-matrix of scalar-valued integrals
\bq
\label{eq:3007}
 \int_{G_0} f(h) \, \mathcal{M}_{\rho}(h, x)_{ij} [ e_\rho(h^{-1} \cdot x)]_i \d_G(h),
\eq
where the components of $\mathcal{M}_\rho(h,x)$ and of  $e_\rho(h^{-1}\cdot x)$ are given in terms of some fixed basis of $\R^d$.
In particular, it is sufficient to consider  the scalar case, so  that we are left with the description of the  convolution operators 
\begin{equation*}
\nu(f):C(X) \supset \CT(X) \longrightarrow \Cinft(X) \subset \D'(X), \quad  \nu(f)=\int_{G_0} f(h)\,\nu(h)d_{G}(h), \quad f \in \S(G_0),
\end{equation*}
$(\nu,C(X))$ being the left-regular representation of $G$ on the Banach space $C(X)$ of continuous functions on $X$. For this, let $v\in \CT(g \, P^u \cdot Z)$ be given by $v=u \circ \phi_g^{-1}$, 
where $u \in \CT(\R^{s+r})$ and $g \in G_0$. By the unimodularity of $G_0$ one computes for $f \in \S(G_0)$
\begin{align*}
(\nu(f) v) ( g \, p_u \cdot z) &= \int_{G_0} f(h) \, v(h^{-1} g \, p_u \cdot z) \d h= \int_{G_0} f(ghg^{-1}) v( (ghg^{-1})^{-1} g\, p_u \cdot z) \d h\\ 
&=\int_{G_0} (L_{g^{-1}}R_{g^{-1}}f)(h) (u\circ \phi^{-1})(  h^{-1} p_u \cdot z) \d h.
\end{align*}
Thus,  the description of $\nu(f)$ in the chart $U_g = g \, P^u \cdot Z$  is reduced  to its study in the canonical chart 
$U_e=P^u \cdot Z \simeq P^u \times Z \simeq \R^{s+r}$.
In the analysis of the  integrals \eqref{eq:3007} we can therefore assume that $\Ucal _\rho$ is contained in the chart $P^u \cdot Z$. Let 
\bqn
G_0=KP_0=KM_0A_0N
\eqn 
 be the Cartan decomposition of $G_0$. Besides the action of $P$, which leaves  $P^u \cdot Z$ invariant, 
we have to consider now also the action of $K$, which does not  leave  $P^u \cdot Z$ invariant.
In view of \eqref{eq:paract}, for $k\in K$ close to the identity we have
 \begin{gather}
 \label{eq:1710} 
\phi^{-1}:(kman)^{-1} p_u\cdot z \stackrel{\simeq}\longmapsto (p_1(n^{-1} (ma)^{-1} p^k_u ma),\dots , \gamma_1(a^{-1}) z^k_1,\dots ), 
\end{gather}
where $k^{-1} p_u \cdot z= p_u^k \cdot z^k$ for some $(p_u^k,z^k) \in P^u \times Z$.
Let  therefore  $\alpha_\rho\in \CT(\mathcal \Ucal_\rho)$  and $\bar \alpha_\rho$ be another function satisfying $ \bar \alpha_\rho\in \CT(\mathcal{U}_\rho')$,  
$\bar \alpha_{\rho\, |\mathcal{U}_\rho}\equiv 1$. Assume that  $v=u \circ\phi^{-1} \in \CT(\mathcal{U}_\rho)$.
With the integration formulas for real reductive groups  we obtain for $p_u \cdot z \in \Ucal_\rho$
\begin{gather*}
\nu(f) v(p_u\cdot z)=\int_{G_0} f(h) (\bar\alpha_\rho v)(h^{-1} p_u\cdot z) \d_G(h) \\=\int_{K\times M_0 \times A_0\times N} 
\left [\int_{\R^{s+r}} e^{i \psi_{p^k_u\cdot z^k,\xi}(m^{-1},a^{-1},n^{-1})} \hat u(\xi) \, \dbar \xi \right ] \\ \cdot  
\bar\alpha_\rho((kman)^{-1} p_u\cdot z) f(kman) a^{-2\rho}\d k  \d m \d a \d n \\ 
= \int_{\R^{s+r}} e ^{i \eklm{\phi^{-1}(p_u \cdot z), \, \xi}} q^\rho_f( p_u \cdot z,\xi)\hat u(\xi) \dbar\xi,
\end{gather*}
where $\psi_{p^k_u\cdot z^k,\xi}$ was defined in \eqref{eq:phase} and we set
\begin{align*}
q^\rho_f(p_u \cdot z,\xi):=&e^{-i\eklm{\phi^{-1}(p_u\cdot z),\xi}} \int_{K\times M_0 \times A_0 \times N}   \bar\alpha_\rho((kman)^{-1} p_u\cdot z)f(kman) \\
 & \cdot e^{i\psi_{p^k_u\cdot z^k,\xi}(m^{-1},a^{-1},n^{-1})} a^{-2\rho}  \d k \d m \d a \d n  .
\end{align*}
To  characterize $\nu(f)$ as a totally characteristic pseudodifferential operator on $\mathcal{U}_\rho$ we consider  the auxiliary symbol
\bqn 
\widetilde q^\rho_f(p_u \cdot z,\xi):=q^\rho_f(p_u \cdot z,(\xi_1,\dots,\xi_s,\xi_{s+1}/z_1,\dots \xi_{s+r}/z_r))
\eqn
and note that in terms of the integral  transform  introduced in Section \ref{sec:parab}  the symbol  $\widetilde q^\rho_f(p_u \cdot z,\xi)$ equals
\bqn
e^{-i\eklm{(p_1,\dots,p_s,1,\dots,1),\xi}} \int_K \F_{\mathrm{spher}}\big ( L_{k^{-1}} (f \overline A_{\rho,p_u \cdot z})\big )\big (p_u \cdot (\chi_1(k,p_u \cdot z), \dots,
 \chi_r(k,p_u \cdot z)),\xi\big) \, dk,
\eqn
where we wrote $\overline A_{\rho,p_u\cdot z}(h):=\overline \alpha_\rho(h^{-1} p_u\cdot z)$, and took into account 
\eqref{eq:chi}, by which $z_j^k=\chi_j(k,p_u\cdot z) z_j$.  If we now apply Proposition \ref{prop:spherfourier}, 
we see that 
\bqn
\F_{\mathrm{spher}}( L_{k^{-1}} (f \overline A_{\rho,p_u \cdot z}))(p_u \cdot (\chi_1(k,p_u \cdot z), \dots, \chi_r(k,p_u \cdot z)),\xi) \, \text{is rapidly decaying in $\xi$,}
\eqn 
 since $(\chi_1(k,p_u \cdot z), \dots, \chi_r(k,p_u \cdot z))\in Z^\ast$.
 Integrating over $K$ then yields the desired statement $\widetilde q^\rho_f(p_u \cdot z,\xi) \in S^{-\infty}(P_u \cdot Z, \R^{s+r})$, 
everything being absolutely convergent. 
Finally, the argument at the end of Section \ref{sec:parab} that showed  that $\widetilde q_f(p_u,\xi)$ satisfies the lacunarity condition 
\eqref{eq:lacunarity} also 
proves that $\widetilde q^\rho_f(p_u \cdot z,\xi)$ is lacunary. Thus, we have shown  the main result of this section. 

\begin{theorem}
\label{thm:2503}
Let $\mathbf{G}$ be a connected reductive algebraic group over $\C$  and $(G,\sigma)$ a split real form of $\mathbf{G}$.
Let $X$ be the real locus of a strict wonderful ${\bf G}$-variety $\textbf X$,  $E$ a smooth real $G$-vector bundle over $X$, 
and $(\pi,\Cinft(X,E))$ the regular representation of $G_0$.  Let $X_0$ be an open $G_0$-orbit in $X$ and $f \in \S(G_0)$.  
Then the  continuous linear operator
\begin{equation*}
\pi(f)_{|\overline{X_0}}:\CT(\overline{X_0},E) \longrightarrow \Cinft(\overline{X_0},E),
\end{equation*}
 is a  totally characteristic pseudodifferential operator of class $\L^{-\infty}_b$ on the manifold with corners $\overline{X_0}$. 
\end{theorem}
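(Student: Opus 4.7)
The plan is to reduce the general statement to the scalar case on the canonical chart $U_e=P^u\cdot Z$ and then carry out there an explicit Fourier representation built on the transform $\F_{\mathrm{spher}}$. First, since $X$ is compact, I would fix a finite atlas $\{(\Ucal_\rho,\phi_\rho^{-1})\}_{\rho\in R}$ with each $\Ucal_\rho$ relatively compact in a slightly larger open set $\Ucal_\rho'\subset U_{g_\rho}$ on which $E$ trivializes via $\tau_E^\rho$. In these coordinates the fibre action of $h$ is a smooth $(d\times d)$-matrix $\mathcal{M}_\rho(h,x)$, so $\pi(f)_{|\Ucal_\rho}$ decomposes into a matrix of scalar integral operators of the form $\nu(f)=\int_{G_0}f(h)\nu(h)\,d_G(h)$, after absorbing the $\Cinft$ entries $\mathcal{M}_\rho(h,x)_{ij}$ into $f$ against a suitable cut-off. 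Thus the theorem reduces to the scalar case. Second, using unimodularity of $G_0$ and the substitution $h\mapsto g_\rho h g_\rho^{-1}$, $\nu(f)$ on $U_{g_\rho}$ pulls back to $\nu(L_{g_\rho^{-1}}R_{g_\rho^{-1}}f)$ on $U_e$, with the transformed function still in $\S(G_0)$. Hence it suffices to analyze $\nu(f)$ on the canonical chart.

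The main computational step is to represent $\nu(f)v(p_u\cdot z)$ on $U_e$ as an oscillatory integral. I would insert a cut-off $\bar\alpha_\rho\in\CT(\Ucal_\rho')$ with $\bar\alpha_\rho\equiv 1$ on $\Ucal_\rho$, apply the Cartan decomposition $G_0=KM_0A_0N$ with Jacobian $a^{-2\rho}$, and use Fourier inversion on $\R^{s+r}$. Writing $k^{-1}p_u\cdot z=p_u^k\cdot z^k$ inside the phase $\psi_{p_u^k\cdot z^k,\xi}$ from \eqref{eq:phase}, and using Corollary \ref{lemma:char} together with \eqref{eq:chi} so that $z_j^k=\chi_j(k,p_u\cdot z)z_j$, the resulting amplitude $q^\rho_f(p_u\cdot z,\xi)$ has auxiliary version $\widetilde q^\rho_f(p_u\cdot z,\xi)$ (obtained via $\xi_{s+j}\mapsto\xi_{s+j}/z_j$) expressible as a $K$-integral of $\F_{\mathrm{spher}}(L_{k^{-1}}(f\bar A_{\rho,p_u\cdot z}))$ evaluated at $p_u\cdot(\chi_1(k,p_u\cdot z),\ldots,\chi_r(k,p_u\cdot z))$, a point of $P^u\cdot Z^\ast$ whenever $z\in Z^\ast$. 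Proposition \ref{prop:spherfourier} then yields rapid decay in $\xi$ uniformly in $k$, and compactness of $K$ gives $\widetilde q^\rho_f\in S^{-\infty}(P^u\cdot Z^\ast\times\R^{s+r})$.

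To finish, I would verify the lacunary condition \eqref{eq:lacunarity}. Since $\chi_j(h,x)$ never vanishes, \eqref{eq:chi} shows that the $G_0$-action preserves the sign of each coordinate $z_j$, so the Schwartz kernel of $\nu(f)$ is supported in the union of the $2^r$ products $P^u\cdot Z_{\pm\cdots\pm}\times P^u\cdot Z_{\pm\cdots\pm}$. Translating this support property into a vanishing of the kernel off these diagonal blocks and substituting $z_j\xi_{s+j}\mapsto\xi_j$ yields exactly $\int e^{-it\xi_j}\widetilde q^\rho_f\,\dbar\xi_j=0$ for $t<-1$ and every $1\leq j\leq r$, i.e.\ the lacunary condition. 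Combined with the symbol estimate this shows $\nu(f)\in\L^{-\infty}_b(\overline{X_0})$, and unwinding the reductions gives the theorem for $\pi(f)$ acting on sections of $E$.

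The main obstacle I anticipate is the coupling between the $K$-integration and the symbol estimates: $K$ does not preserve the canonical chart, so without care the arguments $\chi_j(k,p_u\cdot z)z_j$ of $\F_{\mathrm{spher}}$ could approach the singular locus $\{z_1\cdots z_r=0\}$ where Proposition \ref{prop:spherfourier} offers no estimate. The cut-off $\bar\alpha_\rho$ resolves this by confining the effective $k$-integration to a compact set on which the $\chi_j$ stay bounded away from zero on compacta of $Z^\ast$, so that the rapid-decay bounds are preserved uniformly and $S^{-\infty}$ survives the $K$-integration.
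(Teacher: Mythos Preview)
Your proposal is correct and follows essentially the same route as the paper: reduction to the scalar case via local trivializations and the matrix $\mathcal{M}_\rho$, reduction to the canonical chart by conjugation and unimodularity, the oscillatory-integral representation using the decomposition $G_0=KP_0=KM_0A_0N$, expression of the auxiliary symbol as a $K$-average of $\F_{\mathrm{spher}}$ evaluated at $p_u\cdot(\chi_1,\dots,\chi_r)\in P^u\cdot Z^\ast$, Proposition~\ref{prop:spherfourier} for the $S^{-\infty}$ estimate, and the lacunary condition read off from the orbit structure. Your anticipated obstacle and its resolution via the cut-off $\bar\alpha_\rho$ are also exactly what the paper does; the only cosmetic difference is that since the $\chi_j$ never vanish, the argument of $\F_{\mathrm{spher}}$ lies in $P^u\cdot Z^\ast$ for \emph{all} $z\in Z$, not just $z\in Z^\ast$, so the auxiliary symbol in fact belongs to $S^{-\infty}(P^u\cdot Z\times\R^{s+r})$.
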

\qed

\begin{remark}
Note that  if in the previous theorem $X_0$ is a Riemannian symmetric space, then  its closure $\overline{X_0}$ in $X$ is the maximal Satake compactification of $X_0$,
see \cite[Remark II.14.10]{borel-ji}. 
\end{remark}

For later computations, we will require explicit descriptions of the kernel of $\pi(f)$ in  the different charts of $X$.
Thus,  let $\mklm{\alpha_\rho}_{\rho\in R}$ be a partition of unity subordinate to the  atlas  $\mklm{( \Ucal_{\rho}, \phi_{\rho}^{-1})}_{\rho \in R}$ and
let $\mklm{\bar \alpha_\rho}_{\rho\in R}$ be another set of functions satisfying $ \bar \alpha_\rho\in \CT(\Ucal_\rho')$,  $\bar \alpha_{\rho|\Ucal_\rho}\equiv 1$.
Fix a chart $\Ucal_\rho  \subset g \, P^u \cdot Z$ with $g \in G_0$, and  let $v\in \CT(\Ucal_\rho)$  be given by $v=u \circ \phi_g^{-1}$,  
where $u \in \CT(\R^{s+r})$. We now consider the localization of the integrals \eqref{eq:3007} 
 \begin{align*}
(\,^{ij}Q_{f}^{\rho} u)(y)&:= \int_{G_0} f(h) \, \mathcal{M}_{\rho}(h,gp_u \cdot z)_{ij} (\bar \alpha_\rho v) (h^{-1} gp_u \cdot z) \d_G(h)\\
&=\int_{G_0} f(ghg^{-1})  c^{ij}_{\rho}(g,h, p_u \cdot z) (u\circ \phi^{-1})(  h^{-1} p_u \cdot z) \d_G(h),
 \end{align*}
where we wrote $y=(p,z)=\phi^{-1}_g(g p_u \cdot z)=\phi^{-1}( p_u \cdot z)$ and  put   $c^{ij}_{\rho}(g,h, p_u \cdot z):=\mathcal{M}_{\rho}(ghg^{-1}, g p_u \cdot z)_{ij} \bar 
\alpha_\rho  (g h^{-1} p_u \cdot z)$. 
If we now define
\begin{gather*}
^{ij} q^\rho_f(y,\xi):=e^{-i\eklm{y,\xi}} \int_{K\times M_0 \times A_0 \times N} c^{ij}_{\rho}(g,kman, p_u \cdot z)   f(gkmang^{-1}) \\  
\cdot e^{i \psi_{p^k_u\cdot z^k,\xi}(m^{-1},a^{-1},n^{-1}) } a^{-2\rho} \d k \d m \d a \d n
\end{gather*}
we obtain
\begin{equation}
\label{eq:15.3.2015}
(\,^{ij}Q_{f}^{\rho}) u(y)= \int_{\R^{s+r}} e ^{i \eklm{y,\xi}} \,\, ^{ij} q^\rho_f(y,\xi) \, \hat u(\xi) \dbar\xi.
\end{equation}
By our previous considerations in this subsection,  
\begin{align}
\begin{split}
\label{eq:auxsym}
^{ij}\widetilde q^\rho_f(y,\xi):=&\,^{ij} q^\rho _f(y,(\xi_1,\dots,\xi_s,\xi_{s+1}/z_1,\dots, \xi_{s+r}/z_r))\\ =&e^{-i\eklm{(y_1,\dots,y_s, 1, \dots, 1),\, \xi}} 
\int_{K\times M_0 \times A_0 \times N} c^{ij}_{\rho}(g,kman, p_u \cdot z)   f(gkmang^{-1}) \\  &\cdot e^{i \eklm{(p_1(n^{-1} (ma)^{-1} p^k_u (ma)), \dots, 
\gamma _1(a^{-1}) \chi_1(k,p_u\cdot  z) , \dots ),  \, \xi}} a^{-2\rho} \d k \d m \d a \d n
\end{split}
 \end{align}
is a lacunary symbol of order $-\infty$. Further, for $f \in \S(G_0)$, the restriction of $\pi(f)$ to $\Ucal_\rho$ is  given by 
the $(d\times d)$-matrix of operators 
\eqref{eq:15.3.2015}.
In particular, the kernel of $\pi(f)$ is determined by its restriction to $\{ y=(p,z) \in \R^{s+r}: z_1 \cdots z_r \not=0\}\times \{ y=(p,z) \in \R^{s+r}: z_1 \cdots z_r \not=0\}$, and given  by the matrix of oscillatory integrals
\bqn 
\left( \begin{array}{ccc}
  K_{\, ^{11}Q_f^\rho}(y,y') & \dots &   K_{\, ^{1d}Q_f^\rho}(y,y') \\
\vdots & \ddots & \vdots \\
  K_{\, ^{d1}Q_f^\rho}(y,y')  & \dots &   K_{\, ^{dd}Q_f^\rho}(y,y') 
\end{array}
\right), \qquad y_{s+1}\cdots y_{s+r} \not=0,
\eqn
where
\begin{align}
\label{eqn: explicit form}
\begin{split}
  K_{\, ^{ij}Q_f^\rho}(y,y') &=  \int_{\R^{s+r}} e ^{i\eklm{y-y',   \xi}} \, \, ^{ij}q^\rho _f (y,\xi) \dbar \xi\\ &=\frac 1 {|y_{s+1}\cdots y_{s+r}|}\int_{\R^{s+r}} e^{i\eklm{y-y',   
(\xi_1,\dots,\xi_s,\xi_{s+1}/y_{s+1},\dots, \xi_{s+r}/y_{s+r})}} \, \, ^{ij} \tilde q_f^\rho(y,\xi)  \dbar \xi\\
&=\frac 1 {|y_{s+1}\cdots y_{s+r}|} \,^{ij}\widetilde Q_f^\rho\Big (y,y_1-y'_1, \dots, 1 - \frac{y'_{s+1}}{y_{s+1}}, \dots\Big ),  
\end{split}
\end{align}
and   $^{ij}\widetilde Q_f^\rho(y,\cdot )$ denotes the inverse Fourier transform of the lacunary symbol $^{ij}\tilde q_f^\rho(y,\cdot)$. In particular, \eqref{eqn: explicit form} shows that the kernel of $\pi(f)$ is smooth outside any neighborhood of the diagonal. 
The restriction of the kernel of each of the operators $^{ij}Q_f^{\rho}$ to the diagonal is given by
$$K_{\, ^{ij}Q_f^\rho}(y,y)=\frac 1 {|y_{s+1}\cdots y_{s+r}|}
\, ^{ij} \widetilde Q_f^\rho(y,0), \qquad y_{s+1}\cdots y_{s+r}
\not=0.$$ 
These restrictions yield a family of smooth functions $^{ij}\kappa_f^\rho(x):=K_{\,^{ij}Q_f^\rho}(\phi^{-1}_{\rho}(x),\phi^{-1}_{\rho}(x))$,
which define a density $^{ij} \kappa_f$ on  the union of the open $G_0$-orbits
on $X$. Nevertheless, the functions $^{ij}\kappa_f^\rho(x)$ are not
locally integrable on all of $X$, so that we cannot define a trace of $\pi(f)$ by integrating 
\bqn 
\Tr \left( \begin{array}{ccc}
 ^{11} \kappa_f& \dots &   ^{1d} \kappa_f \\
\vdots & \ddots & \vdots \\
  ^{d1} \kappa_f & \dots &   ^{dd} \kappa_f
\end{array}
\right)
\eqn
 over the diagonal $\Delta_{X
\times X} \simeq X$.
Instead, the explicit form of the local kernels  \eqref{eqn: explicit form} suggests a natural regularization for the trace of the integral operators $\pi(f)$, 
which will be accomplished in the next section. 

\section{Regularized traces and fixed point formulae}

\subsection{Regularized traces}

Let the notation be as in the previous sections. 
In the following, we shall define a regularized trace for the convolution operators \eqref{eq:1703}, based on  the explicit description  \eqref{eqn: explicit form} 
of their kernels and  a classical result of Bernstein-Gelfand on the meromorphic continuation of complex powers. 

\begin{proposition}
\label{prop:4}
Let  $\{\alpha_{\rho}\}$ be a partition of unity subordinate to the atlas $\{( \Ucal_{\rho},
\phi^{-1}_{\rho})\}_{\rho \in R}$.
Let $f \in \S(G_0)$, $\zeta  \in \C$, and define for $\Re \zeta>0$ 
\begin{align*}
\Tr_\zeta \pi(f):&= \sum_{j=1}^d \sum _\rho \int _{\R^{s+r}}  |y_{s+1} \cdots
y_{s+r}|^\zeta 
(\alpha_\rho \circ \phi_{\rho})(y)\, ^{jj}\widetilde Q_f^\rho(y,0) dy\\
&= \eklm{|y_{s+1} \cdots y_{s+r}|^\zeta,\sum_{j=1}^d \sum_\rho
(\alpha_\rho \circ \phi_{\rho}) \, ^{jj}\widetilde
Q_f^\rho(\cdot,0)}.
\end{align*}
Then $\Tr_\zeta \pi(f) $ can be continued analytically to a 
meromorphic function in $\zeta$ with at most poles at  $-1, -3, \dots$.
Furthermore, for $\zeta \in \C-\mklm{-1,-3,\dots}$,
\begin{align*}
\Theta_\pi^\zeta:\CT(G_0) \ni f \mapsto \Tr_\zeta \pi(f) \in \C
\end{align*}
defines a distribution density on $G_0$.
\end{proposition}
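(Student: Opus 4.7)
The plan is to reduce the statement to the classical Bernstein-Gelfand meromorphic continuation of the one-dimensional distribution $|x|^\zeta$ on $\R$, after first establishing that the $y$-integrand is a smooth, compactly supported function on $\R^{s+r}$ depending continuously on $f$.

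As a preliminary step, I would recall from Section \ref{Sec:int op} that the auxiliary symbol $\,^{jj}\widetilde q_f^\rho(y,\xi)$ displayed in \eqref{eq:auxsym} belongs to $S^{-\infty}$ and is therefore rapidly decaying in $\xi$, uniformly on compact $y$-sets. Its inverse Fourier transform in the second slot, $\,^{jj}\widetilde Q_f^\rho(y,\,\cdot\,)$, is consequently smooth in $y$, so that
\[
F_f^\rho(y) := (\alpha_\rho \circ \phi_\rho)(y) \sum_{j=1}^d \,^{jj}\widetilde Q_f^\rho(y,0) \;\in\; \CT(\R^{s+r})
\]
has support contained in a fixed compact set depending only on $\alpha_\rho$. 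For $\Re\zeta > -1$ the weight $|y_{s+1}\cdots y_{s+r}|^\zeta$ is locally integrable on $\R^{s+r}$, the integrals defining $\Tr_\zeta\pi(f)$ converge absolutely, and the sum rewrites as $\sum_\rho \eklm{|y_{s+1}\cdots y_{s+r}|^\zeta,\, F_f^\rho}$.

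Next, I would invoke the meromorphic continuation of $x\mapsto |x|^\zeta$ viewed as a distribution on $\R$: iterating the identity $|x|^\zeta = (\zeta+1)^{-1}\partial_x(\mathrm{sgn}(x)\,|x|^{\zeta+1})$, or equivalently subtracting off the Taylor polynomial of the test function at $0$, the pairing $\phi \mapsto \int_\R|x|^\zeta\phi(x)\,dx$ extends to a meromorphic function of $\zeta\in\C$ with at most simple poles at the negative integers; the evenness of $|x|^\zeta$ forces the residues at the even negative integers to vanish, so only poles at $-1,-3,-5,\dots$ survive. Tensoring these one-variable distributions across the coordinates $y_{s+1},\dots, y_{s+r}$ and extending constantly in $y_1,\dots, y_s$ yields a meromorphic family $\zeta\mapsto |y_{s+1}\cdots y_{s+r}|^\zeta \in \D'(\R^{s+r})$ whose pole set is contained in $\mklm{-1,-3,\dots}$. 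Pairing with $F_f^\rho$ and summing over $\rho$ delivers the desired meromorphic continuation of $\Tr_\zeta\pi(f)$.

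For the distribution-density claim at a regular value of $\zeta$, I would verify continuity of $f \mapsto \Tr_\zeta\pi(f)$ as a functional on $\CT(G_0)$. By the inductive-limit topology, it suffices to fix a compact set $K\subset G_0$ and argue on $\mklm{f \in \CT(G_0):\, \supp f\subset K}$. Tracking the symbol estimates in the proof of Proposition \ref{prop:spherfourier}, together with the additional $K$-integration carried out for the general case in Section \ref{Sec:int op}, the $S^{-\infty}$-seminorms of $\,^{jj}\widetilde q_f^\rho$ are controlled by finitely many Casselman-Wallach seminorms of $f$; on functions supported in $K$, these are in turn bounded by finitely many $C^k$-norms of $f$. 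Hence the corresponding $\CT(\R^{s+r})$-seminorms of $F_f^\rho$ are controlled, and composing with the distribution $|y_{s+1}\cdots y_{s+r}|^\zeta$ — which depends locally uniformly on $\zeta$ away from its poles — yields continuity. The main obstacle I anticipate is the bookkeeping in this final step, namely propagating the symbol estimates of Section \ref{Sec:int op} to seminorm bounds on $F_f^\rho$ that hold locally uniformly in $\zeta$; everything else reduces to a standard application of Bernstein-Gelfand.
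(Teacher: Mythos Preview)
Your proposal is correct and follows essentially the same approach as the paper: the paper's proof simply cites Bernstein--Gelfand for the meromorphic continuation of $|y_{s+1}\cdots y_{s+r}|^\zeta$ as a distribution on $\R^{s+r}$ and refers to \cite[Proposition 4]{parthasarathy-ramacher14} for the remaining details. Your write-up fills in precisely those details (smoothness and compact support of $F_f^\rho$, the one-variable continuation with poles only at odd negative integers, tensoring, and the continuity estimate for the distribution claim), so there is nothing substantively different to compare.
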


\begin{proof}
The proof  is analogous to the proof of  \cite[Proposition 4]{parthasarathy-ramacher14}.
In particular, the fact that $\Tr_\zeta \pi(f)$ can be continued meromorphically  is a consequence of the analytic continuation of
$|y_{s+1} \cdots y_{s+r}|^\zeta$ as a distribution in $\R^{s+r}$ \cite{bernstein-gelfand69}.

\end{proof}
 
Consider next the Laurent expansion of $\Theta_\pi^\zeta(f)$ at $\zeta=-1$.
For this, let $u \in \CT(\R^{s+r})$ be a test function,
and consider the expansion
\bqn 
\eklm{|y_{s+1} \cdots y_{s+r}|^\zeta,u }= \sum_{j=-J}^\infty
S_j(u) (\zeta+1)^j,
\eqn
where $S_j \in \D'(\R^{s+r})$. Since $|y_{s+1} \cdots
y_{s+r}|^{\zeta+1}$ has no pole at $\zeta=-1$, we necessarily
must have
\bqn 
|y_{s+1} \cdots y_{s+r}| \cdot S_j =0 \quad \text{for }\,
j<0, \qquad |y_{s+1} \cdots y_{s+r}| \cdot S_0=1
\eqn
as distributions. In other words, $S_0 \in \D'(\R^{s+r})$
represents a distributional inverse of $|y_{s+1} \cdots y_{s+r}|$. Thus,   we  arrive at the main result of this paper.
\begin{theorem}
\label{prop:tracereg}
For $f \in \S(G_0)$,  let  the regularized trace of the operator $\pi(f)$ be defined by
\begin{align*}
\Tr_{reg}\pi(f)&:=\left\langle S_0,\sum_{j=1}^d\sum_{\rho}
(\alpha_{\rho}\circ \phi_{\rho} ) \, ^{jj} \widetilde
Q_f^\rho(\cdot,0) \right\rangle.
\end{align*}
Then $\Theta_\pi: \CT(G_0) \ni f \mapsto \Tr_{reg} \pi(f) \in \C$  constitutes  a distribution density on  $G_0$ which is  given in terms of the spherical roots of ${\bf X}$. It is called  the \emph{character of the representation $(\pi,\Cinft(X,E))$}. 
\end{theorem}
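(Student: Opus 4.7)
The plan is to extract $\Tr_{reg}\pi(f)$ as the constant term in the Laurent expansion constructed in Proposition \ref{prop:4}, and then verify (i) that the pairing with $S_0$ is well-defined as a scalar for each $f\in \CT(G_0)$, (ii) that the resulting map $f\mapsto \Tr_{reg}\pi(f)$ is continuous, and (iii) that it does not depend on the atlas, partition of unity, or local trivializations chosen. The statement concerning spherical roots comes for free: by Definition \ref{def:spherfourier} and the explicit formulas \eqref{eq:auxsym} and \eqref{eqn: explicit form}, each $^{jj}\widetilde Q_f^\rho$ is the inverse Fourier transform of an amplitude written in terms of $\F_{\mathrm{spher}}$, in which the characters $\gamma_1,\dots,\gamma_r$ appear directly.

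First, I would record the Laurent expansion at $\zeta=-1$. By Proposition \ref{prop:4}, the map $\zeta\mapsto \langle |y_{s+1}\cdots y_{s+r}|^\zeta,u\rangle$ admits a meromorphic continuation to $\C$ for every $u\in \CT(\R^{s+r})$, with possible poles at $-1,-3,\dots$. Writing $\langle |y_{s+1}\cdots y_{s+r}|^\zeta,u\rangle=\sum_{j\geq -J}S_j(u)(\zeta+1)^j$ defines distributions $S_j\in\D'(\R^{s+r})$; the relations $|y_{s+1}\cdots y_{s+r}|\cdot S_j=0$ for $j<0$ and $|y_{s+1}\cdots y_{s+r}|\cdot S_0=1$ force $S_0$ to be a canonical distributional inverse of $|y_{s+1}\cdots y_{s+r}|$. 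Applying it to the compactly supported smooth function
\bqn
u_f(y):=\sum_{j=1}^d\sum_{\rho}(\alpha_\rho\circ\phi_\rho)(y)\,\,^{jj}\widetilde Q_f^\rho(y,0),
\eqn
which lies in $\CT(\R^{s+r})$ because each $\alpha_\rho$ has compact support in $\Ucal_\rho$ and $^{jj}\widetilde Q_f^\rho(\cdot,0)$ is smooth (being the Fourier transform at $0$ of a symbol of order $-\infty$ by Proposition \ref{prop:spherfourier} and the analysis leading to Theorem \ref{thm:2503}), yields the scalar $\Tr_{reg}\pi(f)=\langle S_0,u_f\rangle$.

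Next I would establish the distributional character of $\Theta_\pi$. By inspection of the integral representation \eqref{eq:auxsym} and the estimates obtained in the proof of Proposition \ref{prop:spherfourier}, the $S^{-\infty}$-seminorms of $^{jj}\widetilde q_f^\rho$ are bounded by finitely many $\S(P_0)$-seminorms of $f$; consequently the seminorms of $u_f$ in $\CT(\R^{s+r})$ are controlled by finitely many seminorms of $f$ in $\CT(G_0)\hookrightarrow\S(G_0)$. The map $f\mapsto u_f$ is therefore continuous from $\CT(G_0)$ to $\CT(\R^{s+r})$, and composing with the fixed distribution $S_0$ gives a continuous linear functional, hence a distribution density on $G_0$.

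The main obstacle, in my view, is the independence of $\Tr_{reg}\pi(f)$ from the auxiliary data (atlas $\{(\Ucal_\rho,\phi_\rho^{-1})\}$, partition of unity $\{\alpha_\rho\}$, and trivializations $\tau_E^\rho$). For this I would argue intrinsically: for $\Re\zeta$ large, $\Theta_\pi^\zeta(f)$ agrees with the globally defined pairing of the density $|z_1\cdots z_r|^\zeta \tr K_{\pi(f)}|_\Delta$ against $1$, which involves only the canonical defining functions $z_1,\dots,z_r$ of the boundary divisors of $X\setminus X_0$; since transition between two choices of atlas or trivialization produces a smooth nonvanishing Jacobian and a conjugation of matrix coefficients that leaves the fiberwise trace invariant, the meromorphic continuation on the right hand side is independent of these choices, and so is the value of its Laurent coefficient $S_0(u_f)$ at $\zeta=-1$. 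The spherical-root dependence is then a direct consequence of the fact that the defining functions $z_j$ transform under $T$ by the characters $\gamma_j$, which enter $u_f$ precisely through $\F_{\mathrm{spher}}$.
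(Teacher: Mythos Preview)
Your proposal is correct and follows the same line as the paper, only with more detail: the paper gives no proof beyond the Laurent-expansion setup preceding the statement and a bare \qed, with coordinate invariance deferred to a remark citing \cite{atiyah70}. Your argument fills in exactly the steps the paper leaves implicit (well-definedness of $u_f$, continuity in $f$ via the $S^{-\infty}$-estimates, and independence of auxiliary data), so there is nothing to correct.
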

\qed

\begin{remark} \hspace{0cm} 
\begin{enumerate}
\item The coordinate invariance of the defined regularized trace $\Tr_{reg}\pi(f)$  is guaranteed by standard arguments, see \cite[Corollary 1]{atiyah70}.
\item Alternatively, a similar regularized trace can be defined using the calculus of b-pseudodifferential operators developed by Melrose.
For a detailed description, the reader is referred to  \cite[Section 6]{loya98}.
\end{enumerate}
\end{remark}

In what follows, we shall identify  distributions with distribution densities on $G$ via the Haar measure $d_G$.

 \subsection{Fixed point formulae}
\label{Sec:6}

Our next aim is to understand the distributions $\Theta^\zeta_\pi$ and $\Theta_\pi$ in terms of the $G_0$-action on $X$.
 We shall actually show that on a certain open set of transversal elements, they are represented by locally integrable functions  given in terms of fixed points.
Similar expressions where derived by Atiyah and Bott \cite{atiyah-bott68} for the global character of an induced representation of $G_0$, and 
we were inspired by these formulae.

Let the notation be as before, and consider for each element   $g\in G$ the transformation  $\Phi_g: X \rightarrow X, \, x \mapsto g^{-1} \cdot x$. Recall that $\Phi_g$ is called \emph{transversal} 
if all its fixed points are \emph{simple}, meaning that one has $\det(\1 - (d\Phi_g)_{x_0})\not=0$ at each fixed point $x_0\in X$.
Further note that the set $G_0(X):=\mklm{g \in G_0: \, \Phi_g \, \text{is transversal}}\subset G_0$ of elements  acting transversally on $X$ is  open.
With the notation as before we then have the following 

\begin{theorem}
\label{thm:FPF}  Let  $f \in \CT(G_0)$ have  support in $G_0(X)$, and $\zeta \in \C$ be such that  $\Re \zeta>-1$.
Let further $\Fix(X,h)$ denote the set of fixed points  on $X$ of $\Phi_h$,  $h\in G$.
Then $\Tr_\zeta \pi(f)$ is given by the expression
\begin{align*}
\begin{split}
\Tr_\zeta \pi(f)=&\int_{G_0(X)}  f(h) \left (\sum_{x \in \Fix(X,h)} \frac{\Tr(h:E_x \rightarrow E_x)}{ |\det (\1 -d\Phi_{h}(x))|} \right. \\
& \left. \cdot  \sum _\rho  { \alpha_\rho (x)  |y_{s+1}({g_\rho }^{-1} \cdot x) \cdots
y_{s+r}({g_\rho }^{-1} \cdot x)|^{\zeta+1} } \right ) d_G(h).
\end{split}
\end{align*}
  In particular, $\Theta_\pi^\zeta:\CT(G_0) \ni f \to \Tr_\zeta \pi(f)\in \C$ is regular on $G_0(X)$. 
\end{theorem}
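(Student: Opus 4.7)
The plan is to unfold $\Tr_\zeta\pi(f)$ by performing the inner $\xi$-integration in $^{jj}\widetilde Q_f^\rho(y,0)=\int\,^{jj}\widetilde q_f^\rho(y,\xi)\dbar\xi$ in the distributional sense, thereby localizing the expression onto the fixed-point set of $\Phi_h$ in each chart, and then reducing it to a standard delta-function evaluation via the transversality hypothesis.

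By inspection of \eqref{eq:auxsym} together with \eqref{eq:1710}, the phase in $^{jj}\widetilde q_f^\rho(y,\xi)$ can be written as $\langle G(y,h),\xi\rangle$ with components $G_i(y,h)=\phi^{-1}_i(h^{-1}\phi(y))-y_i$ for $1\le i\le s$ and $G_{s+j}(y,h)=\phi^{-1}_{s+j}(h^{-1}\phi(y))/y_{s+j}-1$ for $1\le j\le r$. Formally exchanging the $\xi$- and $h$-integrations yields $\int e^{i\langle G,\xi\rangle}\dbar\xi = \delta(G(y,h))$, and the rescaling of the last $r$ components gives the key identity $\delta(G(y,h)) = |y_{s+1}\cdots y_{s+r}|\,\delta(\phi^{-1}(h^{-1}\phi(y))-y)$. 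Substituting this into the definition of $\Tr_\zeta\pi(f)$ and then changing variables $h\mapsto g_\rho^{-1}h g_\rho$ using the conjugation-invariance of $d_G$ turns $f(g_\rho h g_\rho^{-1})$ into $f(h)$, transforms the argument of the delta into $\phi_\rho^{-1}(h^{-1}\phi_\rho(y))-y$, and converts $c^{jj}_\rho$ into $\mathcal{M}_\rho(h,\phi_\rho(y))_{jj}\,\bar\alpha_\rho(h^{-1}\phi_\rho(y))$.

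Next I would apply Fubini to the remaining $y$- and $h$-integrals, which is valid for $\Re\zeta>-1$ because $|y_{s+1}\cdots y_{s+r}|^{\zeta+1}$ is then locally integrable and $f$ is compactly supported in $G_0(X)$, where every $\Phi_h$ is transversal. Transversality guarantees that the zero set of $y\mapsto\phi_\rho^{-1}(h^{-1}\phi_\rho(y))-y$ consists of isolated simple points $y_0=\phi_\rho^{-1}(x_0)$ with $\det(\1-d\Phi_h(x_0))\ne 0$, so the standard pullback formula for delta distributions gives
\begin{equation*}
\int F(y)\,\delta\bigl(\phi_\rho^{-1}(h^{-1}\phi_\rho(y))-y\bigr)\,dy=\sum_{x_0\in\Fix(X,h)\cap\Ucal_\rho'}\frac{F(\phi_\rho^{-1}(x_0))}{|\det(\1-d\Phi_h(x_0))|},
\end{equation*}
the absolute determinant being coordinate-free at a fixed point. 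Since each such $x_0$ lies in $\supp\alpha_\rho\subset\Ucal_\rho\subset\Ucal_\rho'$ one has $\bar\alpha_\rho(h^{-1}x_0)=\bar\alpha_\rho(x_0)=1$, and summing over $j$ replaces $\sum_j\mathcal{M}_\rho(h,x_0)_{jj}$ by the coordinate-independent trace $\Tr(h:E_{x_0}\to E_{x_0})$. Reassembling $\sum_\rho\sum_{x_0\in\Fix(X,h)\cap\Ucal_\rho'}$ as $\sum_{x_0\in\Fix(X,h)}\sum_\rho$ with the partition-of-unity weights then produces the claimed fixed-point formula and shows simultaneously that $\Theta_\pi^\zeta$ is represented on $G_0(X)$ by a continuous function of $h$ (the fixed points and their Jacobians depend smoothly on $h$ by the implicit function theorem), hence is regular there.

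The main technical obstacle is the rigorous justification of the oscillatory Fubini step $\int\int e^{i\langle G,\xi\rangle}\,d_G\,\dbar\xi = \int\delta(G)\,d_G$, since the inner $\xi$-integral only has conditional meaning. This can be handled by identifying $\delta(G)$ with the distributional pullback of $\delta_0$ under the submersion $G$ (transversality on $\supp f$ ensures $G$ has regular zero set) and pairing with the smooth compactly supported density $f\cdot c^{jj}_\rho\cdot|y_{s+1}\cdots y_{s+r}|^\zeta\alpha_\rho$; alternatively, a direct non-stationary phase estimate localizes the $\xi$-integral to a neighborhood of $\{G=0\}$, reducing the problem to the transverse setting and allowing the delta-function evaluation above to be applied.
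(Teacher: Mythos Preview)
Your proposal is correct and follows essentially the same route as the paper: identify the phase (the paper calls it $\Psi(h,y)$, your $G(y,h)$), collapse the $\xi$-integral to a delta supported on fixed points, rescale the last $r$ variables to produce the factor $|y_{s+1}\cdots y_{s+r}|$, and evaluate via transversality. The paper makes your acknowledged ``oscillatory Fubini'' step rigorous by inserting a cutoff $\chi(\epsilon\xi)$, using dominated convergence (the symbol is $S^{-\infty}$) to write $\Tr_\zeta\pi(f)=\lim_{\epsilon\to 0}I_\epsilon$, interchanging the then absolutely convergent integrals, and computing $\lim_{\epsilon\to 0}I_\epsilon(h)$ pointwise via the changes of variables $y'=y-\phi^h(y)$ and $y''=(\1_s\otimes T_y^{-1})y'$ near each simple fixed point---exactly the concrete form of your delta-function evaluation.
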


\begin{proof}
The proof is similar to the proof of Theorem 7 in \cite{parthasarathy-ramacher14}. By Proposition \ref{prop:4}, 
\begin{align*}
\Tr_\zeta \pi(f)&=\sum_{j=1}^d\sum _\rho \int _{\R^{s+r}} |y_{s+1} \cdots
y_{s+r}|^{\zeta} 
(\alpha_\rho \circ \phi_{\rho})(y)   \, ^{jj} \widetilde Q_f^\rho(y,0) dy
\end{align*}
is a meromorphic function in $\zeta$ with possible poles at $-1,-3,\dots$. Assume that $\Re \zeta>-1$. Since 
$
^{ij} \widetilde Q_f^\rho(y,0)=\int \,^{ij} \tilde q_f^\rho(y,\xi) \dbar \xi,
$
where $ ^{ij}\tilde q_f^\rho(y,\xi)\in \Sym^{-\infty}_{la}(\R^{s+r} \times \R^{s+r})$ is rapidly decaying in $\xi$, 
the order of integration can be interchanged, yielding
\begin{align*}
\Tr_\zeta \pi(f)&=\sum_{j=1}^d \sum _\rho \int_{\R^{s+r}}  \int _{\R^{s+r}} |y_{s+1} \cdots
y_{s+r}|^{\zeta}
(\alpha_\rho \circ \phi_{\rho})(y)  \, ^{jj}\tilde q_f^{\rho}(y,\xi)  dy \, \dbar \xi.
\end{align*}
Let $\chi \in \CT(\R^{s+r}, \R^+)$ be equal $1$ in a neighborhood of $0$ and $\epsilon >0$. Then, by Lebesgue's theorem on bounded convergence,
\bqn 
\Tr_s \pi(f)=\lim_{\epsilon \to 0} I_\epsilon,
\eqn
where we set
\bqn
I_\epsilon:= \sum_{j=1}^d \sum _\rho \int_{\R^{s+r}}  \int _{\R^{s+r}} |y_{s+1} \cdots y_{s+r}|^{\zeta}
(\alpha_\rho \circ \phi_{\rho})(y)  \,^{jj} \tilde q_f^{\rho}(y,\xi)  \chi(\epsilon \xi) \, dy  \, \dbar \xi.
\eqn
In what follows, write  $\phi^h(y):=(\phi^{-1} \circ h^{-1} \circ \phi)(y)$, and let $T_y$ be the diagonal $(r\times r)$-matrix with entries $y_{s+1},\dots, y_{s+r}$.
Interchanging the order of integration once more  one obtains  with 
 \eqref{eq:auxsym} 
\begin{align*}
I_\epsilon=&\int_{G_0}   \sum_{j=1}^d \sum _\rho   f(g_\rho hg^{-1}_\rho) \int_{\R^{s+r}} \int _{\R^{s+r}}e^{i \eklm{\Psi(h,y),  \xi}} \\ 
& \cdot  c^{jj}_{\rho}(g_\rho ,h, \phi(y))   (\alpha_\rho \circ \phi_{\rho})(y) |y_{s+1} \cdots y_{s+r}|^{\zeta} \chi(\epsilon \xi) dy \,  \dbar \xi \,  d_G(h),
\end{align*}
where  with  $\phi(y)=p_u \cdot z$ we wrote
\begin{gather*}
\Psi(h,y):=[(\1_s\otimes T_y^{-1})(\phi^h(y)-y)]=\big (y_1(h^{-1}p_u \cdot z)-y_1,\dots,\chi_{{}_1}(h^{-1},p_u \cdot z)-1,\dots, \big ),
\end{gather*}
everything being absolutely convergent. Let us now write $I_\epsilon(h)$ for the integrand of the $G_0$-integral in $I_\epsilon$, so that 
$
I_\epsilon= \int_{G_0} I_\epsilon(h) \, d_G(h). 
$
In order  to pass to the limit under the integral, we shall show that $\lim_{\epsilon \to 0} I_\epsilon(h)$ is an integrable function on $G_0$.
Now, it is not difficult to see that, as $\epsilon \to 0$, the main contributions to $I_\epsilon(g)$ originate from the fixed points of $\Phi_h$. Since 
\bq
\label{eq:fix}
g \cdot x \in \Fix(X,h) \quad \Longleftrightarrow \quad x \in \Fix(X,g^{-1}hg),
\eq
it is sufficient to examine them in the canonical chart. To compute these contributions, note that due to the fact that all fixed points are simple,
$y \mapsto \phi^{h}(y)-y$ defines a diffeomorphism near fixed points.
Performing successively the changes of variables $y'=y-\phi ^h(y)$ and $y''=(\1_s \otimes T^{-1}_{y(\eps y')})y'$ one obtains  for $\lim_{\epsilon \to 0} I_\epsilon(h) $ the expression 
\begin{gather*}
 \sum _\rho    f(g_\rho h g_\rho^{-1})   \sum_{j=1}^d \sum_{x \in \Fix(P^u \cdot Z,h)} 
\frac{\alpha_\rho(g_\rho \cdot x)\mathcal{M}_{\rho}(g_\rho hg_\rho ^{-1}, g_\rho \cdot x)_{jj} |y_{s+1}(x) \cdots y_{s+r}( x)|^{\zeta+1} }{ |\det (\1 -d\Phi_{h}(x))|},
\end{gather*}
where we took into account that  for $x =\phi(y)=p_u \cdot z \in \Fix(P^u \cdot Z,h)$ one has
\begin{align*}
 c^{ij}_{\rho}(g_\rho,h,\phi(y))(\alpha_\rho \circ \phi_{\rho})(y)   &= \alpha_\rho(g_\rho \cdot x)\mathcal{M}_{\rho}(g_\rho hg_\rho ^{-1}, g_\rho \cdot x)_{ij}.
\end{align*}
The limit function $\lim_{\epsilon \to 0} I_\epsilon(g)$ is therefore clearly integrable on $G(X)$ for $\Re \zeta > -1$.
Passing to the limit under the integral and conjugating then yields with \eqref{eq:fix}
\begin{gather*}
\Tr_\zeta \pi(f)=\lim_{\epsilon \to 0} I_\epsilon= \lim_{\epsilon \to 0} \int_{G_0} I_\epsilon(h) \, d_G(h)\\ =\int_{G_0}  f(h) \sum_{x \in \Fix(X,h)} \sum_{j=1}^d  
\sum _\rho        \frac{ \alpha_\rho (x) \mathcal{M}_\rho(h,x)_{jj} |y_{s+1}({g_\rho^{-1} } \cdot x) \cdots
y_{s+r}({g_\rho^{-1} } \cdot x)|^{\zeta+1} }{ |\det (\1-d\Phi_{h}(x))|} \, d_G(h).
\end{gather*}
Since $\sum_{j=1}^e \mathcal{M}_{jj}(g,x)= \Tr(g:E_x \rightarrow E_x)$, the assertion of the theorem  follows.  
 \end{proof}
From the previous theorem it is now clear that if $f 
\in \CT(G_0(X))$,  $\Tr_\zeta \pi(f)$ is not singular at $\zeta=-1$. Consequently, we obtain

\begin{corollary} 
\label{cor:2}
Let $f \in \CT(G_0)$ have support in $G_0(X)$. Then 
\bqn
\Tr_{reg} \pi(f)= \Tr_{-1} \pi(f)= \int_{G_0(X)}   f(g)  \sum_{x \in \Fix(X,g)}     \frac{\Tr(g:E_x \rightarrow E_x) }{ |\det (\1-d\Phi_{g}(x))|} \, d_G(g).
\eqn
In particular, the distribution $\Theta_\pi:f \to \Tr_{reg}(f)$ is regular on $G_0(X)$. 
\end{corollary}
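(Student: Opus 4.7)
The plan is to deduce Corollary \ref{cor:2} from Theorem \ref{thm:FPF} by taking the limit $\zeta \to -1$ inside the integral representation and matching the result against the definition of $\Tr_{reg}$ given in Proposition \ref{prop:4} and Theorem \ref{prop:tracereg}.

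To begin, I would note that for every $h$ in the compact support of $f$ the map $\Phi_h$ is transversal, so that $\Fix(X,h)$ is finite by compactness of $X$. Theorem \ref{thm:FPF} then yields, for $\Re\zeta > -1$, an integral representation of $\Tr_\zeta \pi(f)$ whose integrand is a finite sum indexed by fixed points, with all $\zeta$-dependence carried by the factor
\begin{equation*}
W_\rho(x)^{\zeta+1}, \qquad W_\rho(x) := |y_{s+1}(g_\rho^{-1}\cdot x)\cdots y_{s+r}(g_\rho^{-1}\cdot x)|.
\end{equation*}
Pointwise in $(h,x,\rho)$, this factor is entire in $\zeta$ when $W_\rho(x) > 0$ with value $1$ at $\zeta=-1$, while on the boundary stratum $W_\rho(x)=0$ it vanishes identically on $\{\Re\zeta>-1\}$ and continues by zero to $\zeta=-1$; thus fixed points sitting on lower-dimensional $G_0$-orbits do not contribute. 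Combined with the partition-of-unity identity $\sum_\rho \alpha_\rho \equiv 1$, the pointwise limit of the integrand at $\zeta=-1$ is exactly the fixed-point sum displayed in the corollary.

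The next step is to pass to the limit $\zeta \to -1$ under the integral sign. For this I would invoke dominated convergence, for which one needs, on a compact $K \subset G_0(X)$ containing $\supp f$, uniform bounds on both $|\Fix(X,h)|$ and on $|\det(\1 - d\Phi_h(x))|^{-1}$ as $h$ ranges over $K$ and $x$ ranges over $\Fix(X,h)$. These bounds follow from the implicit function theorem applied to $x\mapsto \Phi_h(x) - x$ at each simple fixed point, together with compactness of $K$ and $X$; the factors $W_\rho(x)^{\zeta+1}$ are then dominated by $\max\{1, \sup_{x \in X} W_\rho(x)\}$ for $\Re\zeta \in [-1,0]$. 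One concludes that $\zeta \mapsto \Tr_\zeta \pi(f)$ is regular at $\zeta=-1$, with value equal to the integral in the corollary.

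Finally, by Proposition \ref{prop:4} and Theorem \ref{prop:tracereg}, $\Tr_{reg}\pi(f)$ is the constant term of the Laurent expansion of $\Tr_\zeta \pi(f)$ at $\zeta=-1$. Since no pole is present at $\zeta=-1$, this constant term coincides with $\Tr_{-1}\pi(f)$, and this yields both equalities of the corollary; regularity of $\Theta_\pi$ on $G_0(X)$ is immediate from the resulting explicit expression. The only genuine technical point is the uniform control needed for dominated convergence; once that is in place the argument is a routine specialization of the formulas already established.
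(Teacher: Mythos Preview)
Your argument is correct and follows the same route as the paper: Theorem \ref{thm:FPF} shows that $\Tr_\zeta\pi(f)$ is holomorphic at $\zeta=-1$, so its value there coincides with the constant Laurent coefficient, which is $\Tr_{reg}\pi(f)$ by definition. The paper's proof is terser---it simply asserts the absence of a pole and reads off the Laurent expansion without spelling out the dominated-convergence justification you supply.
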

\begin{proof}
By Theorem \ref{thm:FPF}, $\Tr_\zeta \pi(f)$ has  no pole at $\zeta=-1$. Therefore, the Laurent expansion of $\Theta_\pi^\zeta(f)$ at $\zeta=-1$ must read
\begin{align*}
\Tr_\zeta \pi(f)&= \eklm{|y_{s+1} \cdots y_{s+r}|^\zeta,\sum_\rho \sum_{j=1}^d
(\alpha_\rho \circ \phi_{\rho}) \, ^{jj} \widetilde
Q_f^\rho(\cdot,0)}\\ & = \sum_{j=0}^\infty
S_j\Big (\sum_\rho \sum_{j=1}^d
(\alpha_\rho \circ \phi_{\rho})\, ^{jj} \widetilde
Q_f^\rho(\cdot,0)\Big ) (\zeta+1)^j,
\end{align*}
where $S_j \in \D'(\R^{s+r})$.  Thus, 
\bqn 
\Tr_{-1} \pi(f)=\eklm{S_0,\sum_\rho \sum_{j=1}^d
(\alpha_\rho \circ \phi_{\rho})\, ^{jj} \widetilde
Q_f^\rho(\cdot,0)}=\Tr_{reg} \pi(f),
\eqn
and the assertion follows with the previous theorem. 
\end{proof}

Note that from  Corollary \ref{cor:2}  it is immediate  that $\Theta_\pi$ is independent of the chosen atlas of $X$ and invariant under conjugation  as a distribution  on $G_0(X)$.
Furthermore,   a flat trace     $\Tr^\flat \pi(g)$ of  $\pi(g) $ can be defined, and as it turns out \cite{atiyah-bott67}, 
\bqn 
\Tr^\flat \pi(g)=  \sum_{x \in \Fix(X,g)}     \frac{\Tr(g:E_x \rightarrow E_x) }{ |\det (\1-d\Phi_{g}(x))|},
\eqn
so that we finally obtain 
\bqn 
\Theta_\pi(f)=\Tr_{reg} \pi(f)=\int_{G_0(X)} f(g) \Tr^\flat \pi(g) d_G(g), \qquad f \in \CT(G_0(X)).
\eqn
We would like to close by noting that on $G_0(X)$ the distribution $\Theta_\pi$ no longer explicitly  depends on the spherical roots of $\bf X$, but it of course still does on the whole group $G_0$.


\providecommand{\bysame}{\leavevmode\hbox to3em{\hrulefill}\thinspace}
\providecommand{\MR}{\relax\ifhmode\unskip\space\fi MR }
\providecommand{\MRhref}[2]{%
  \href{http://www.ams.org/mathscinet-getitem?mr=#1}{#2}
}
\providecommand{\href}[2]{#2}


\end{document}